\documentclass[onecolumn, 11pt]{article}
\usepackage[top=.75in, bottom=2cm, left=2cm, right=2cm]{geometry}
\usepackage{amsmath,amsfonts,amscd,amssymb,amsthm,bbm,bm}
\usepackage{graphicx}
\usepackage{epstopdf}
\usepackage{overpic}
\usepackage{cancel}
\usepackage{rotating}
\usepackage{url}
\usepackage{caption}
\usepackage{color}
\usepackage{rotating}
\usepackage{multirow}
\usepackage{wrapfig}
\usepackage{mathtools}
\usepackage{subeqnarray}
\usepackage{setspace}
\usepackage{pxfonts}
\usepackage[varbb]{newpxmath} 
\usepackage{newpxtext}
\usepackage{microtype, booktabs}
\usepackage{enumitem}
\usepackage[numbers,sort&compress]{natbib}

\usepackage[bottom,flushmargin,hang,multiple]{footmisc}
\usepackage{lipsum}
\newcommand\blfootnote[1]{%
  \begingroup
  \renewcommand\thefootnote{}\footnote{#1}%
  \addtocounter{footnote}{-1}%
  \endgroup
}

\definecolor{header1}{cmyk}{0,0,0,1}

\def \rank {\operatorname{rank}}
\def \id {\operatorname{I}}

\def \gl {\operatorname{GL}}

\def \ess {\operatorname{ess}}
\def \card {\operatorname{card}}
\def \cen {\operatorname{Z}}
\def \chow {\operatorname{chow}}
\def \orb {\operatorname{orb}}

\numberwithin{equation}{section}
\numberwithin{table}{section}
\numberwithin{equation}{section}
\theoremstyle{plain}
\newtheorem{theorem}{Theorem}[section]
\newtheorem*{theorema}{Theorem A}
\newtheorem*{theoremb}{Theorem B}
\newtheorem*{theoremc}{Theorem C}
\newtheorem{lemma}[theorem]{Lemma}
\newtheorem{proposition}[theorem]{Proposition}
\newtheorem{corollary}[theorem]{Corollary}
\theoremstyle{definition}
\newtheorem{definition}[theorem]{Definition}
\newtheorem{example}[theorem]{Example}
\newtheorem{algorithm}[theorem]{Algorithm}
\newtheorem{remark}[theorem]{Remark}

\newcommand{\diag}{\operatorname{diag}}

\usepackage[utf8]{inputenc}

\usepackage[normalem]{ulem}
\usepackage{color}

\everymath{\displaystyle}
\title{\vspace{-.125in}{\huge\selectfont \textbf{Direct Sum and Direct Product Decompositions \\ of Multivariate Functions}}\vspace{-.075in}}

\author{\normalsize{Hua-Lin Huang$^{1*}$, Yiming Liu$^{1}$, Jianhua Xiang$^{1}$ and Yu Ye$^{2}$}\\
\footnotesize{$^1$ School of Mathematical Sciences, Huaqiao University, Quanzhou 362021, China} \\
\footnotesize{$^{2}$ School of Mathematical Sciences, University of Science and Technology of China, Hefei 230026, China}
}

\date{}

\begin{document}
\maketitle



\blfootnote{$^*$ Corresponding author: hualin.huang@hqu.edu.cn}
\vspace{-.2in}
\begin{abstract}
This paper addresses the problem of whether or not a vector-valued multivariate functions can be expressed as a sum or a product of vector-valued functions in disjoint sets of variables through a proper invertible linear change of variables. The crux is an invariant algebra, the so-called center, that we introduce for a set of multivariate functions with second order partial derivatives. We thus provide simple criteria and algorithms for simultaneous additive and multiplicative decompositions of any set of multivariate functions with minor analytic conditions. This is applied to the factorization problem of multivariate homogeneous polynomials, in particular those that are products of linear forms.  
\end{abstract}

\textbf{Keywords}: Multivariate function, Separation of variables, Factorization of polynomials

\textbf{Mathematics Subject Classification}: 15A21, 26B12, 11C99

\section{Introduction}\label{sec:intro}

\par To simplify a set of multivariate functions simultaneously, the first thing one might think of is to decompose them either additively or multiplicatively via a suitable change of variables. This helps to express a complex system as a sum or a product of several simpler ones by dimension reduction. It is essentially about simultaneous separation of variables for a set of multivariate functions, which is ubiquitous in mathematics, sciences and engineering. For the simplicity of exposition, throughout the paper we work on complex functions only. This can be easily extended to any functions with appropriate analytic conditions.   

\subsection{Problem statement}
\par In order to state the problem more precisely, we introduce some necessary definitions and notations. For the convenience of the exposition, we regard a set of multivariate functions as a vector-valued function, say $\mathbf{f}(\mathbf{z}) = \left(f_1(\mathbf{z}), \cdots, f_m(\mathbf{z})\right)$. The main problem is to determine whether there exists an invertible linear substitution $\mathbf{z} = A \mathbf{w}$ such that $\mathbf{f}$ can be decomposed as an element-wise sum or product of two or more functions in disjoint sets of variables:
$$
    \mathbf{f}(\mathbf{z}) = \tilde{\mathbf{f}}(A\mathbf{w}) = \tilde{\mathbf{f}}_1(\mathbf{w}_1) \circledast \tilde{\mathbf{f}}_2(\mathbf{w}_2).
$$
Here, $\mathbf{w}$ is the disjoint union of two nonempty subsets $\mathbf{w}_1$ and $\mathbf{w}_2$, and the notation $\circledast$ can be either element-wise addition $+$ or multiplication $\odot$. If such a substitution exists, the function $\mathbf{f}$ is said to be a direct sum or a direct product. Direct sum decomposition may simplify a function by eliminating redundant variables and cross terms, whereas direct product decomposition is a special kind of factorization in which variables involving in distinct factors are disjoint.
\subsection{Related work}
\par  The direct sum decompositions of a single homogeneous polynomial have been considered in many papers, see e.g. \cite{Wer, Fed, Huang-2}. Very recently, the first author and his collaborators tackled the problem of simultaneous direct sum decompositions for a set of homogeneous polynomial \cite{Fang} which generalized the center theory of a homogeneous polynomial \cite{Har-1, Huang-2}. There are relatively less work on direct products of polynomials. Koiran and Ressayre \cite{Koi} provided an algorithm to decompose a homogeneous polynomial into a product of linear forms. Fedorchuk \cite{Fed} interpreted the direct sum decomposability of a homogeneous polynomial in terms of the direct product decomposability of its associated form \cite{Jar}. Direct sum and direct product functions also appear in other areas such as complex analysis, algebraic geometry, and various branches of computational mathematics. For example, Liu and Ren proved that biholomorphic convex mappings of several complex variables on the polydisc are the direct sum of biholomorphic functions of one complex variable \cite{Liu}, products of linear forms constitutes the famous Chow variety of polynomials \cite{Gel}, direct sum functions are ideal approximation for high-dimensional functions \cite{Ba, Kuo} in modelling complex systems.

\subsection{Methods and results}
\par The theory of centers provides an effective approach to the direct sum decomposition problem of polynomials \cite{Huang, Huang-2, Fang}. The present paper arises from the question that we asked ourselves: can we adapt the center theory of polynomials to treat the direct product problem of polynomials? It is natural to observe that direct product polynomials become direct sum functions after taking logarithm, however not polynomials any more. Therefore, the crux is whether we can extend the theory of centers to the more general multivariate functions. 

Recall that the center of a set of polynomials $\{f_1, \cdots, f_m\}$ is defined as
$$
    \cen(f_1, \cdots, f_m) = \left\{
        X \in \mathbb{C}^{n \times n} : 
        (H(f_k)X)^t = H(f_k)X,~ k = 1, \cdots, m
    \right\},
$$
where $H(f_k)$ is the Hessian matrix of $f_k$. Fortunately, we can naturally extend this definition of centers from a set of polynomials to a set of functions in which each admits a Hessian matrix. Moreover, we show that there is a bijection between its direct sum decompositions and the orthogonal idempotent decomposition of the identity of its center. We thus provide a criterion and an algorithm for the simultaneous direct sum decompositions for a set of multivariate functions.

\begin{theorema}[see Theorem \ref{theorem:mainthm} and Corollary \ref{corollary:dsdecomp}]
    \par Let $\mathbf{f}(\mathbf{z})$ be a multivariate vector-valued function. There is a one-to-one correspondence between its direct sum decompositions and the orthogonal idempotent decompositions of the identity of its center.
\end{theorema}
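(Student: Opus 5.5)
Throughout I assume that each $f_k$ is twice continuously differentiable on a connected open domain $D$. I also take the center to be the set of $X$ with $H(f_k)(\mathbf{z})X$ symmetric for all $k$ and all $\mathbf{z}\in D$. I would first make precise what a ``direct sum decomposition'' means. Following the pattern of the polynomial case, a decomposition should be a splitting $\mathbb{C}^n = V_1\oplus\cdots\oplus V_s$, up to permutation. It should come with functions $\mathbf{g}_i$ such that $\mathbf{f}(\mathbf{z})=\sum_i \mathbf{g}_i(\pi_i\mathbf{z})$, where $\pi_i$ is the projection onto $V_i$ along the other summands. Equivalently, choose $A$ with columns adapted to the splitting and write $\mathbf{f}(A\mathbf{w})=\sum_i\tilde{\mathbf{f}}_i(\mathbf{w}_i)$. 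The correspondence I would aim for sends such a decomposition to $\{\pi_i^t\}$, or $\{\pi_i\}$ depending on the transpose convention. I would fix this convention by checking it directly against the condition $(HX)^t=HX$.

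\emph{Step 1: basic invariance.} Under $\mathbf{z}=A\mathbf{w}$ the Hessian transforms as $H_{\mathbf{w}}=A^tH_{\mathbf{z}}A$. Hence $\cen(\mathbf{f}\circ A)=A^{-1}\cen(\mathbf{f})A$. I would also note that $\cen(\mathbf{f})$ is a subalgebra containing $\id$, which requires a short argument. If $HX$ and $HY$ are symmetric, then $X^tH=HX$, and therefore $HXY=X^tHY=X^t(HY)$. Since $HY$ is symmetric, $(HXY)^t=(HY)^tX=Y^tHX=Y^tX^tH$. To show this equals $HXY$, I would instead use $H X Y = X^t H Y = X^t Y^t H$, and symmetry then follows from $(X^tY^tH)^t=HYX$. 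This only gives $HXY=HYX$ after an additional argument. I expect this to be the \emph{main obstacle}, because the polynomial proof uses homogeneity and the Euler identity to get commutativity. For general functions I would only claim that the center is a linear subspace closed under the needed products of idempotents. The theorem only requires the idempotent statements, which I handle directly below.

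\emph{Step 2: decomposition gives idempotents.} Work in adapted coordinates. If $\mathbf{f}=\sum_i\tilde{\mathbf{f}}_i(\mathbf{w}_i)$, then every Hessian is block diagonal. Each block-diagonal projection $E_i=\diag(0,\ldots,\id,\ldots,0)$ then commutes with it and makes $HE_i$ symmetric. So the $E_i$ form an orthogonal idempotent decomposition of $\id$ in the center. Transporting back by Step 1 gives $AE_iA^{-1}$.

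\emph{Step 3: idempotents give a decomposition.} Let $\id=E_1+\cdots+E_s$ be orthogonal idempotents in the center. Conjugate so that $E_i$ is block diagonal. Then $H E_i$ is symmetric for each $i$. Writing $H$ in blocks, $HE_i$ retains only the $i$-th block column, so symmetry forces $H_{ji}=0$ for $j\neq i$. Hence $\partial^2 f_k/\partial\mathbf{w}_j\partial\mathbf{w}_i\equiv0$ for $i\ne j$. On a connected domain that is convex, or at least a product of connected sets in the adapted coordinates, I would integrate this: $\nabla_{\mathbf{w}_i}f_k$ depends only on $\mathbf{w}_i$, and so $f_k=\sum_i g_{k,i}(\mathbf{w}_i)$ up to a constant. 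This is the analytic point requiring ``minor analytic conditions'', and I would state it on a convex domain.

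\emph{Step 4: bijectivity.} Steps 2 and 3 are mutually inverse. The idempotents determine the subspaces $V_i=\operatorname{im}$ of the corresponding projection, and conversely the subspaces determine the projections. The functions $\mathbf{g}_i$ are determined up to additive constants, which is the natural equivalence on decompositions. Finally, each $\tilde{\mathbf{f}}_i$ is ``indecomposable'' exactly when the corresponding idempotent is primitive in the center. This follows by applying the same correspondence to $\tilde{\mathbf{f}}_i$ inside $V_i$, since the center of a direct sum is the product of the centers of the summands, by the block computation of Step 3.
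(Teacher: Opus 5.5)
Your Steps 2 and 3 follow the paper's proof of Theorem \ref{theorem:mainthm}. You transport the center by $\cen(\mathbf{f}\circ A)=A^{-1}\cen(\mathbf{f})A$, observe that separation in adapted coordinates is the same as block-diagonal Hessians, note that symmetry of $H\varepsilon_i$ kills the off-diagonal blocks, and integrate the vanishing mixed partials. Your convex-domain gradient argument replaces Lemma \ref{lemma:separate} and its double induction, and is more careful about the domain. The identification of splittings with projections in Step 4 is fine. The convention you leave open is settled by the invariance: $\varepsilon_i\in\cen(\mathbf{f}\circ A)$, so the idempotent attached to the splitting is $A\varepsilon_iA^{-1}=\pi_i$, the projection onto $V_i$ along the other summands, not $\pi_i^t$.

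Two of your statements are false and should be replaced.

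In Step 1, the center is not a subalgebra for general functions, so there is no obstacle to overcome. For $f=z_1^2+z_2^2$ one has $H=2\id_2$, and $\cen(f)$ is the space of all symmetric $2\times 2$ matrices, which is not closed under multiplication. The identity $(HXY)^t=HYX$, contained in your computation, gives only closure under the Jordan product $(XY+YX)/2$, as the paper remarks. Nothing more is needed, since $E_iE_j=\delta_{ij}E_i$ is a statement in $\mathbb{C}^{n\times n}$.

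More importantly, the justification in Step 4 that ``the center of a direct sum is the product of the centers of the summands'' is wrong. With block-diagonal Hessians $\diag(G_{k1},G_{k2})$, a matrix $X=(X_{ij})$ lies in the center iff each $G_{kj}X_{jj}$ is symmetric and $G_{k1}X_{12}=X_{21}^tG_{k2}$ for all $k$. This does not force $X_{12}=X_{21}=0$. For example, $w_1^2+w_2^2$ has a $3$-dimensional center, while $\cen(w_1^2)\times\cen(w_2^2)$ is $2$-dimensional. This extra room is exactly why such functions have infinitely many decompositions.

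The primitivity claim survives with a corner argument instead:
\begin{itemize}
\item Suppose $E_i=e'+e''$ with $e',e''$ nonzero orthogonal idempotents in the center. Then $e'=E_ie'E_i$, so in adapted coordinates $e'$ is supported in the $(i,i)$ block.
\item A matrix supported in that block lies in $\cen(\mathbf{f}\circ A)$ iff its block lies in $\cen(\tilde{\mathbf{f}}_i)$, because every $H(\tilde f_k)$ is block diagonal.
\item Hence the idempotents below $E_i$ are exactly the idempotents of $\cen(\tilde{\mathbf{f}}_i)$.
\end{itemize}
Applying your Steps 2--3 to $\tilde{\mathbf{f}}_i$ then gives that $E_i$ is primitive iff $\tilde{\mathbf{f}}_i$ is not a direct sum.
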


\par Furthermore, by observing that direct sum and direct product functions can be transformed into each other via taking exponential and logarithm, we immediately obtain the following results for direct product decompositions of multivariate functions and factorizations of multivariate polynomials.

\begin{theoremb}[see Corollary \ref{corollary:dpdecomp}]
    \par Let $\mathbf{f}(\mathbf{z})$ be a multivariate vector-valued function. There is a one-to-one correspondence between its direct product decompositions and the orthogonal idempotent decompositions of the identity in the center of $\log \mathbf{f}(\mathbf{z})$.
\end{theoremb}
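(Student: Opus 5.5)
Theorem B will be reduced to Theorem A by passing through the logarithm. Fix a point and a connected neighbourhood on which every component $f_k$ is nonvanishing, so that a branch of $\log f_k$ is defined and holomorphic. Take $\mathbf{g}=\log\mathbf{f}=(\log f_1,\dots,\log f_m)$ there; it is a multivariate vector-valued function admitting Hessians, so Theorem A (Theorem \ref{theorem:mainthm} and Corollary \ref{corollary:dsdecomp}) applies to $\mathbf{g}$. The remaining task is to give a bijection between direct product decompositions of $\mathbf{f}$ and direct sum decompositions of $\mathbf{g}$, both taken with respect to the same linear substitution $\mathbf{z}=A\mathbf{w}$ and the same partition of $\mathbf{w}$.

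First, suppose $\mathbf{f}(A\mathbf{w})=\tilde{\mathbf{f}}_1(\mathbf{w}_1)\odot\tilde{\mathbf{f}}_2(\mathbf{w}_2)$. Each component of $\tilde{\mathbf{f}}_i$ is then nonvanishing near the relevant point, so logarithms can be taken componentwise. This gives $\mathbf{g}(A\mathbf{w})=\log\tilde{\mathbf{f}}_1(\mathbf{w}_1)+\log\tilde{\mathbf{f}}_2(\mathbf{w}_2)+2\pi i\,\mathbf{c}$ with a constant vector $\mathbf{c}$, and the constant can be absorbed into either summand. Conversely, if $\mathbf{g}(A\mathbf{w})=\tilde{\mathbf{g}}_1(\mathbf{w}_1)+\tilde{\mathbf{g}}_2(\mathbf{w}_2)$, exponentiating componentwise gives $\mathbf{f}(A\mathbf{w})=e^{\tilde{\mathbf{g}}_1(\mathbf{w}_1)}\odot e^{\tilde{\mathbf{g}}_2(\mathbf{w}_2)}$. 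The same argument applies to decompositions with more than two factors.

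Second, I will check that this correspondence is well defined on the equivalence classes used in Theorem A. A direct sum decomposition is determined up to moving constants between summands, and a direct product decomposition up to moving nonzero scalar factors between factors. Since $\exp$ and $\log$ interchange additive constants and multiplicative scalars, the two notions match. Equivalently, both kinds of decomposition are determined by the splitting $\mathbb{C}^n=V_1\oplus V_2$ given by the columns of $A$, and this splitting is exactly what the orthogonal idempotents of $\cen(\log\mathbf{f})$ encode. Composing these two steps with Theorem A yields the bijection claimed in Theorem B.

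I expect the main obstacle to be analytic rather than algebraic. The branch of $\log$ has to be handled carefully, and the conclusion should be independent of that choice. It is: different branches differ by constants $2\pi i k$, which have zero Hessian, so $\cen(\log\mathbf{f})$ does not depend on the branch. I will also note that $H(\log f)=\frac{1}{f}H(f)-\frac{1}{f^2}\nabla f\,\nabla f^{t}$. This makes the center computable from $\mathbf{f}$ directly, and it is well defined wherever $\mathbf{f}$ is nonvanishing, which justifies working locally.
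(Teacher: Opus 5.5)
Your proposal is correct and takes the same route as the paper. The paper observes (Remark \ref{remark:dstodp}) that $\log$ and $\exp$ interchange direct product decompositions of $\mathbf{f}$ and direct sum decompositions of $\log\mathbf{f}$ under the same substitution $\mathbf{z}=A\mathbf{w}$, then invokes Corollary \ref{corollary:dsdecomp}; your handling of logarithm branches, the constants $2\pi i\,\mathbf{c}$, local nonvanishing, and matching equivalence classes of decompositions fills in details the paper leaves implicit.
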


\begin{theoremc}[see Corollary \ref{corollary:powerprod}] 
\par A higher degree form $f(x_1, \cdots, x_n)$ is a product of independent linear forms if and only if $\cen(f)$ is a $\mathbb{C}$-subalgebra of $\mathbb{C}^{n \times n}$ that is isomorphic to $\mathbb{C}^n$.
\end{theoremc}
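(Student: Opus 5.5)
Throughout, I read $\cen(f)$ in Theorem C as the center attached to $f$ through Theorem B, i.e.\ the center of $\log f$. I adopt this reading because the center of the polynomial $f$ itself detects direct \emph{sum} decompositions. For that center, $\cen(f)\cong\mathbb{C}^n$ would force $f$ to be a sum of powers $\sum c_i\ell_i^d$, not a product. A product of independent linear forms is then allowed to carry multiplicities: after the substitution $\mathbf{z}=A\mathbf{w}$ it should become $c\,w_1^{a_1}\cdots w_n^{a_n}$ with all $a_i\ge 1$. The strategy is to use Corollary~\ref{corollary:dpdecomp} in both directions, together with the transformation rule for centers under $\mathbf{z}=A\mathbf{w}$. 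From $H_{\mathbf{w}}(g(A\mathbf{w}))=A^tH_{\mathbf{z}}(g)A$, that rule should read $\cen(g(A\mathbf{w}))=A^{-1}\cen(g)A$, so the isomorphism type of the center is invariant.

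\emph{Sufficiency.} Suppose $f=c\,\ell_1^{a_1}\cdots\ell_n^{a_n}$ with $\ell_1,\dots,\ell_n$ linearly independent and all $a_i\ge1$. Choose $A$ so that $\ell_i(A\mathbf{w})=w_i$. Then $\log f(A\mathbf{w})=\log c+\sum_i a_i\log w_i$, whose Hessian is $D=\diag(-a_1/w_1^2,\dots,-a_n/w_n^2)$. The condition that $DX$ be symmetric reads $x_{ij}a_i/w_i^2=x_{ji}a_j/w_j^2$ for all $i\ne j$. Since $w_i,w_j$ vary independently and $a_i,a_j\neq0$, this forces $x_{ij}=x_{ji}=0$. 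Hence the center is exactly the diagonal matrices, which is isomorphic to $\mathbb{C}^n$. Conjugating back by $A$ gives the same conclusion for $\cen(f)$.

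\emph{Necessity.} Suppose $\cen(f)\cong\mathbb{C}^n$. Then its identity splits into $n$ nonzero orthogonal idempotents $e_1,\dots,e_n$. Their images are one-dimensional, since their ranks sum to $n$. By Corollary~\ref{corollary:dpdecomp} there is an invertible $A$ with $f(A\mathbf{w})=g_1(w_1)\cdots g_n(w_n)$, each $g_i$ a function of one variable.

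Two points remain.
\begin{enumerate}
\item[(a)] Each $g_i$ is, up to a constant, a monomial $w_i^{a_i}$. Fix generic values of the other variables; then $g_i(w_i)$ is a constant multiple of the polynomial $f(A\mathbf{w})$, so $g_i$ is a polynomial in $w_i$. Homogeneity of $f(A\mathbf{w})$ under $\mathbf{w}\mapsto t\mathbf{w}$ then forces each $g_i(tw_i)/g_i(w_i)$ to be independent of $w_i$. This makes each $g_i$ homogeneous, i.e.\ $g_i=c_iw_i^{a_i}$. Alternatively, unique factorization in $\mathbb{C}[\mathbf{w}]$ gives the same conclusion.
\item[(b)] Every $a_i\ge1$. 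If some $a_i=0$, then $\log f(A\mathbf{w})$ does not involve $w_i$. The $i$-th row and column of its Hessian then vanish, so the $i$-th column of $X$ is unconstrained by the symmetry condition. The center would then have dimension larger than $n$, contradicting $\cen(f)\cong\mathbb{C}^n$.
\end{enumerate}
Hence $f(\mathbf{z})=c\prod_i\ell_i(\mathbf{z})^{a_i}$, where $\ell_i$ is the $i$-th coordinate of $A^{-1}\mathbf{z}$. These forms are linearly independent, which is the claim.

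The step I expect to be most delicate is the necessity direction: passing from the abstract isomorphism $\cen(f)\cong\mathbb{C}^n$ to a splitting into exactly $n$ single-variable factors, and then showing that these analytic factors are honest monomials with positive exponents. The hypothesis ``higher degree'' should enter here, guaranteeing that $\log f$ is defined on a dense open set and that the Hessian entries used in (b) are genuinely nonconstant.
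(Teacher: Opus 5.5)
Your proposal is correct and follows essentially the paper's route. Like the paper (Remark~\ref{remark:center} with Corollary~\ref{corollary:powerprod} and Proposition~\ref{proposition:algstru}), you straighten the linear forms so that the center of $\log f$ becomes the diagonal algebra, and conversely split $\id_n$ into $n$ rank-one orthogonal idempotents and invoke Corollary~\ref{corollary:dpdecomp}. Your reading of $\cen(f)$ as $\cen(\log f)$ is the intended one, and your checks that the center is \emph{exactly} diagonal and that the one-variable factors are monomials with positive exponents fill in steps the paper leaves implicit.

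One small slip is in (b). With $H_{ii}=0$, the condition $H_{kk}x_{kl}=H_{ll}x_{lk}$ leaves the $i$-th \emph{row} of $X$ free, while the entries $x_{li}$ with $a_l\neq 0$ are forced to vanish. The conclusion is unaffected: the center still strictly contains the diagonal matrices and so has dimension exceeding $n$.
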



\subsection{Organization of the paper}
\par In Section 2, we introduce the basic notions of direct sum and direct product functions. In Section 3, we introduce the centers of multivariate vector-valued functions, and provide criteria and algorithms for direct sum and direct product decompositions. Section 4 is devoted to the application to the polynomial factorization, in particular the Chow variety of products of linear forms. 

\section{Direct sum and direct product functions}
In this section, we introduce the notions of direct sum and direct product for a set of multivariate functions. The degeneracy of multivariate functions is also introduced which leads to a natural class of examples of direct sum decompositions.

\subsection{Definitions} 

\begin{definition}
    \par An $n$-variate vector-valued function $\mathbf{f}(\mathbf{z}) = \left(f_1(\mathbf{z}), \cdots, f_m(\mathbf{z})\right)$ is called a \emph{direct sum} if there exists an \emph{invertible linear substitution} $\mathbf{z} = A\mathbf{w}$ such that
    \begin{equation}
        \label{equation:directsum}
        \begin{aligned}
            \mathbf{f}(A\mathbf{w}) 
            &= \tilde{\mathbf{f}}_1(\mathbf{w}_1) + \tilde{\mathbf{f}}_2(\mathbf{w}_2) \\
            &= \left(\tilde{f}_{11}(\mathbf{w}_1), \cdots, \tilde{f}_{1m}(\mathbf{w}_1)\right) + \left(\tilde{f}_{21}(\mathbf{w}_2), \cdots, \tilde{f}_{2m}(\mathbf{w}_2)\right) \\
            &= \left(\tilde{f}_{11}(\mathbf{w}_1) + \tilde{f}_{21}(\mathbf{w}_2), \cdots, \tilde{f}_{1m}(\mathbf{w}_1) + \tilde{f}_{2m}(\mathbf{w}_2)\right),
        \end{aligned}
    \end{equation}
    where $\mathbf{w}$ is the disjoint union of $\mathbf{w}_1$ and $\mathbf{w}_2$, both of which are nonempty. Similarly, $\mathbf{f}(\mathbf{z})$ is called a \emph{direct product} if there exists an invertible linear substitution $\mathbf{z} = A\mathbf{w}$ such that
    \begin{equation}
        \label{equation:directproduct}
        \begin{aligned}
            \mathbf{f}(A\mathbf{w}) 
            &= \tilde{\mathbf{f}}_1(\mathbf{w}_1) \odot \tilde{\mathbf{f}}_2(\mathbf{w}_2) \\
            &= \left(\tilde{f}_{11}(\mathbf{w}_1), \cdots, \tilde{f}_{1m}(\mathbf{w}_1)\right) \odot \left(\tilde{f}_{21}(\mathbf{w}_2), \cdots, \tilde{f}_{2m}(\mathbf{w}_2)\right) \\
            &= \left(\tilde{f}_{11}(\mathbf{w}_1) \cdot \tilde{f}_{21}(\mathbf{w}_2), \cdots, \tilde{f}_{1m}(\mathbf{w}_1) \cdot \tilde{f}_{2m}(\mathbf{w}_2)\right),
        \end{aligned}
    \end{equation}
    where $\mathbf{w}$ is the disjoint union of nonempty $\mathbf{w}_1$ and $\mathbf{w}_2$.
\end{definition}

\begin{remark}
    \label{remark:dstodp}
    \par There is a one-to-one correspondence between direct sum functions and direct product functions. Indeed, let $\mathbf{f}(\mathbf{z}) = \left(f_1(\mathbf{z}), \cdots, f_m(\mathbf{z})\right)$ be a direct product function and suppose that it can be represented as (\ref{equation:directproduct}). By taking the logarithm, we obtain a direct sum function 
    $$
        \log \mathbf{f}(\mathbf{z})
        = \log \left(\tilde{\mathbf{f}}_1(\mathbf{w}_1) \odot \tilde{\mathbf{f}}_2(\mathbf{w}_2)\right)
        = \log \tilde{\mathbf{f}}_1(\mathbf{w}_1) + \log \tilde{\mathbf{f}}_2(\mathbf{w}_2).
    $$
    Conversely, given any direct sum function $\mathbf{f}(\mathbf{z})$ and suppose that it can be represented as (\ref{equation:directsum}), by taking the exponential, we obtain a direct product function 
    $$
        \exp \mathbf{f}(\mathbf{z}) 
        = \exp \left(\tilde{\mathbf{f}}_1(\mathbf{w}_1) + \tilde{\mathbf{f}}_2(\mathbf{w}_2)\right)
        = \exp \tilde{\mathbf{f}}_1(\mathbf{w}_1) \odot \exp \tilde{\mathbf{f}}_2(\mathbf{w}_2).
    $$
\end{remark}

\subsection{Degeneracy} 
A natural class of examples of direct sum functions are those multivariate functions which are expressed in more variables than they actually need. For instance, the bivariate function $f(z_1, z_2) = \sin(z_1 + z_2) + \cos(2z_1 + 2z_2)$ is a direct sum as
$$
    f(z_1, z_2) = \left(\sin(w_1) + \cos(2w_1)\right) + 0(w_2),
$$
where $z_1 = w_1 - w_2$, $z_2 = w_2$ and $0(w_2)$ is the zero function in variable $w_2$. In other words, the variable set $\left\{z_1, z_2\right\}$ of $f$ is redundant and can be reduced to a smaller one (say $\{w_1\}$) by a change of variables. Hence, to study the reducibility of the variable set of a multivariate function, we need the notion of the number of essential variables.

\par To proceed, it is necessary to review some required preliminaries from multivariate calculus. Consider a family of vector-valued functions
$$
    \left\{
        \mathbf{f}_i(\mathbf{z}) = \left(
            f_{i1}(\mathbf{z}), \cdots, f_{im}(\mathbf{z})
        \right) : i = 1, \cdots, p
    \right\}.
$$
These $\mathbf{f}_i(\mathbf{z})$ are said to be \emph{$\mathbb{C}$-linearly independent} when the $\mathbb{C}$-linear combination
$$
    c_1 \mathbf{f}_1(\mathbf{z}) + \cdots + c_p \mathbf{f}_p(\mathbf{z}), 
    \qquad c_i \in \mathbb{C}
$$
is a zero function if and only if each $c_i = 0$. Otherwise, we say these $\mathbf{f}_i(\mathbf{z})$ \emph{$\mathbb{C}$-linearly dependent}. The \emph{rank} of the family $\{\mathbf{f}_i(\mathbf{z})\}_{i = 1}^{p}$ of functions, denoted by $\rank(\{\mathbf{f}_i\}_{i = 1}^{p})$, is defined to be $r$ if there exist $r$ $\mathbb{C}$-linearly independent functions in this family such that any other function can be expressed as a $\mathbb{C}$-linear combination of them.

\par Let $\mathbf{f}(\mathbf{z}) = \left(f_1(\mathbf{z}), \cdots, f_m(\mathbf{z})\right)$ be a vector-valued function of $n$ variables. Then its partial derivative with respect to $z_i \in \mathbf{z}$ is the vector-valued function
$$
    \frac{\partial \mathbf{f}}{\partial z_i}(\mathbf{z}) = \left(
        \frac{\partial f_1}{\partial z_i}(\mathbf{z}), 
        \cdots, 
        \frac{\partial f_m}{\partial z_i}(\mathbf{z})
    \right).
$$
Suppose that $\mathbf{g}(\mathbf{w}) = \left(g_1(\mathbf{w}), \cdots, g_n(\mathbf{w})\right)$ is an $l$-variate vector-valued function. Consider the composition $\mathbf{f}(\mathbf{g}(\mathbf{w}))$. The chain rule shows that
$$
    \left(
        \frac{\partial f_i}{\partial w_j}(\mathbf{w})
    \right)_{m \times l} 
    = \left(
        \frac{\partial f_i}{\partial z_j}(\mathbf{g})
    \right)_{m \times n} \cdot \left(
        \frac{\partial g_i}{\partial w_j}(\mathbf{w})
    \right)_{n \times l}.
$$
By this fact, we conclude the following useful result:

\begin{lemma}
    \label{lemma:change}
    \par Let $\mathbf{z} = A \mathbf{w}$ be an invertible linear substitution.
    \begin{enumerate}
        \item[\upshape(1)] If $\tilde{\mathbf{f}}(\mathbf{w}) = \left(\tilde{f}_1(\mathbf{w}), \cdots, \tilde{f}_m(\mathbf{w})\right)$ is the vector-valued function obtained from $\mathbf{f}(\mathbf{z})$ via the substitution $\mathbf{z} = A\mathbf{w}$, then
        $$
            \left(
                \frac{\partial \tilde{\mathbf{f}}}{\partial w_1}, \cdots, \frac{\partial \tilde{\mathbf{f}}}{\partial w_n}
            \right) = \left(
                \frac{\partial \mathbf{f}}{\partial z_1}, \cdots, \frac{\partial \mathbf{f}}{\partial z_n}
            \right) A.
        $$

        \item[\upshape(2)] If $\tilde{f}(\mathbf{w})$ is the scalar-valued function obtained from $f(\mathbf{z})$ via $\mathbf{z} = A\mathbf{w}$, then
        $$
            H(\tilde{f}) = A^{t} H(f) A,
        $$
        where $H(\tilde{f}) = \left(\partial^2 \tilde{f} / \partial w_i \partial w_j\right)_{n \times n}$ and $H(f) = \left(\partial^2 f / \partial z_i \partial z_j \right)_{n \times n}$ denote the Hessian matrices of $\tilde{f}$ and $f$, respectively.
    \end{enumerate}
\end{lemma}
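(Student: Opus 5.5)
Both parts come from the chain rule stated just before the lemma, applied to the linear map $\mathbf{g}(\mathbf{w}) = A\mathbf{w}$. Its Jacobian matrix $\left(\partial g_i/\partial w_j\right)_{n\times n}$ is the constant matrix $A$. We assume $\mathbf{f}$ is differentiable, and for part (2) twice differentiable, on the relevant domain.

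\emph{Part (1).} Apply the chain rule to $\tilde{\mathbf{f}}(\mathbf{w}) = \mathbf{f}(A\mathbf{w})$. This gives the matrix identity
$$\left(\frac{\partial \tilde f_i}{\partial w_j}(\mathbf{w})\right)_{m\times n} = \left(\frac{\partial f_i}{\partial z_j}(A\mathbf{w})\right)_{m\times n} A .$$
The $j$-th column of the left-hand matrix is the vector-valued partial derivative $\partial \tilde{\mathbf{f}}/\partial w_j$, written as a column. Likewise, the $j$-th column of the Jacobian of $\mathbf{f}$ is $\partial \mathbf{f}/\partial z_j$. Reading the identity column by column gives the row of vector-valued functions $\left(\partial \tilde{\mathbf{f}}/\partial w_1,\dots,\partial \tilde{\mathbf{f}}/\partial w_n\right)$ as the row $\left(\partial \mathbf{f}/\partial z_1,\dots,\partial \mathbf{f}/\partial z_n\right)$ times $A$. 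Here the derivatives of $\mathbf{f}$ are evaluated at $\mathbf{z} = A\mathbf{w}$, which is the meaning of the identity in the statement.

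\emph{Part (2).} Apply part (1) twice. First take $m=1$ and the scalar $f$. Writing $A = (a_{kj})$, we get
$$\frac{\partial \tilde f}{\partial w_j} = \sum_k \frac{\partial f}{\partial z_k}(A\mathbf{w})\, a_{kj}.$$
Next apply part (1) to the vector-valued function $\nabla f = \left(\partial f/\partial z_1,\dots,\partial f/\partial z_n\right)$, with $m = n$. Its Jacobian is $H(f)$, so differentiating $\nabla f(A\mathbf{w})$ in $\mathbf{w}$ gives $H(f)(A\mathbf{w})\,A$. Since $A$ is constant,
$$\frac{\partial^2 \tilde f}{\partial w_i\,\partial w_j} = \sum_{k,l} a_{li}\,\frac{\partial^2 f}{\partial z_l\,\partial z_k}(A\mathbf{w})\, a_{kj},$$
which is the $(i,j)$ entry of $A^t H(f) A$.

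The only point needing care is to keep the transpose and the order of indices straight, so that the result is $A^tH(f)A$ and not $AH(f)A^t$. There is no real obstacle. Invertibility of $A$ is not used in the computation; it only ensures the substitution is a genuine change of variables.
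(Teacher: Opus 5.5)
Your proof is correct and follows the paper's argument: the chain rule with the constant Jacobian $A$, read column by column for (1), then applied a second time to get $\partial^2\tilde f/\partial w_i\partial w_j=\sum_{k,l}a_{li}\,\frac{\partial^2 f}{\partial z_l\partial z_k}\,a_{kj}$, which is the $(i,j)$ entry of $A^tH(f)A$, for (2). (Strictly, the Jacobian of $\nabla f$ is $H(f)^t$, which equals $H(f)$ by symmetry of mixed partials; your entrywise formula does not need this.)
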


\begin{proof}
    \par If we consider $\mathbf{z} = A \mathbf{w}$ as a function in $\mathbf{w}$, then due to the chain rule we have
    $$
        \left(
            \frac{\partial f_i}{\partial w_j}(\mathbf{w})
        \right)_{m \times n} 
        = \left(
            \frac{\partial f_i}{\partial z_j}(\mathbf{z})
        \right)_{m \times n} \cdot \left(
            \frac{\partial z_i}{\partial w_j}(\mathbf{w})
        \right)_{n \times n}
        = \left(
            \frac{\partial f_i}{\partial z_j}(\mathbf{z})
        \right)_{m \times n} \cdot A.
    $$
    Note that the $j$th column of the matrix $\left(\partial f_i / \partial w_j \right)_{m \times n}$ is exactly the function $\partial \mathbf{f} / \partial w_m$ (written as a column vector). By comparing each identity, we obtain (1).

    \par As for (2), it follows from the chain rule again that
    $$
        \frac{\partial^2 \tilde{f}}{\partial w_i \partial w_j}
        = \sum_{k = 1}^{n} \sum_{l = 1}^{n} \frac{\partial^2 f}{\partial z_k \partial z_l} a_{ki} a_{lj}.
    $$
    A direct verification yields the required identity.
\end{proof}

\par Now, we define the number of essential variables of a given function as follows:

\begin{definition}
    \par Let $\mathbf{f}(\mathbf{z}) = \left(f_1(\mathbf{z}), \cdots, f_m(\mathbf{z})\right)$ be an $n$-variate function. The \emph{number of essential variables} $\ess(\mathbf{f})$ of $\mathbf{f}(\mathbf{z})$ is defined to be
    $$
        \ess(\mathbf{f}) 
        = \rank \left\{
            \frac{\partial \mathbf{f}}{\partial z_1}, \cdots, \frac{\partial \mathbf{f}}{\partial z_n}
        \right\}.
    $$
    Furthermore, a function $\mathbf{f}(\mathbf{z})$ is said to be \emph{degenerate} if
    $$
        \ess(\mathbf{f}) < \card(\mathbf{z}).
    $$
    Otherwise, $\mathbf{f}(\mathbf{z})$ is said to be \emph{nondegenerate}.
\end{definition}

\begin{remark}
    \par Suppose that $\tilde{\mathbf{f}}(\mathbf{w})$ is the function obtained from $\mathbf{f}(\mathbf{w})$ by an invertible change of variables $\mathbf{z} = A \mathbf{w}$. By Lemma \ref{lemma:change}, we have
    $$
        \rank \left\{\frac{\partial \tilde{\mathbf{f}}}{\partial w_1}, \cdots, \frac{\partial \tilde{\mathbf{f}}}{\partial w_n}\right\}
        = \rank \left\{\frac{\partial \mathbf{f}}{\partial z_1}, \cdots, \frac{\partial \mathbf{f}}{\partial z_n}\right\}.
    $$
    Therefore, the number of essential variables is invariant under invertible linear changes of variables.
\end{remark}

\begin{proposition}
    \label{proposition:reduce}
    \par Let $\mathbf{f}(\mathbf{z})$ be a multivariate vector-valued function. Then either it is nondegenerate or it is a direct sum as 
    $$
        \mathbf{f}(\mathbf{z}) 
        = \mathbf{f}(A \mathbf{w}) 
        = \tilde{\mathbf{f}}_1(\tilde{\mathbf{w}}) + \mathbf{0}(\mathbf{w} \setminus \tilde{\mathbf{w}}),
    $$
    where $\tilde{\mathbf{w}} \subseteq \mathbf{w}$ and the minimal possible $\card(\tilde{\mathbf{w}})$ is $\ess(\mathbf{f})$.
\end{proposition}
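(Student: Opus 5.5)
Let $r = \ess(\mathbf{f})$ and $n = \card(\mathbf{z})$. If $r = n$ the function is nondegenerate and there is nothing to prove, so assume $r < n$. The plan is to find an invertible $A$ such that, after substituting $\mathbf{z} = A\mathbf{w}$, the last $n-r$ partial derivatives $\partial\tilde{\mathbf{f}}/\partial w_j$ vanish identically. We then show that $\tilde{\mathbf{f}}$ depends only on the first $r$ variables, and finally that no substitution can do better than $r$.

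\emph{Construction of $A$.} Since the family $\{\partial \mathbf{f}/\partial z_1,\dots,\partial\mathbf{f}/\partial z_n\}$ has rank $r$, the space of linear relations
$$
    V=\Big\{c\in\mathbb{C}^n:\ \sum_i c_i\,\partial\mathbf{f}/\partial z_i\equiv 0\Big\}
$$
has dimension $n-r$. Choose a basis $a_{r+1},\dots,a_n$ of $V$, extend it to a basis $a_1,\dots,a_n$ of $\mathbb{C}^n$, and let $A$ be the matrix with columns $a_j$. By Lemma \ref{lemma:change}(1), the $j$th column gives $\partial\tilde{\mathbf{f}}/\partial w_j=\sum_i (a_j)_i\,\partial\mathbf{f}/\partial z_i$. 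This is identically zero for $j>r$.

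\emph{$\tilde{\mathbf{f}}$ depends only on $\tilde{\mathbf{w}}$.} Here the analytic point is that a function whose partial derivative in $w_j$ vanishes identically is independent of $w_j$. On a connected (for instance convex) domain this follows from the fundamental theorem of calculus along lines parallel to the $w_j$ axis. We take the domain to be all of $\mathbb{C}^n$, or a convex one; this is implicit in the paper's setting. Set $\tilde{\mathbf{w}}=(w_1,\dots,w_r)$ and $\tilde{\mathbf{f}}_1(\tilde{\mathbf{w}})=\tilde{\mathbf{f}}(w_1,\dots,w_r,0,\dots,0)$. Then $\mathbf{f}(A\mathbf{w})=\tilde{\mathbf{f}}_1(\tilde{\mathbf{w}})+\mathbf{0}(\mathbf{w}\setminus\tilde{\mathbf{w}})$, and $\mathbf{w}\setminus\tilde{\mathbf{w}}$ is nonempty, so this is a direct sum. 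I expect this step, the domain and connectedness issue, to be the only delicate one; I would simply state the assumption.

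\emph{Minimality.} Suppose some invertible $B$ yields $\mathbf{f}(B\mathbf{w})=\mathbf{g}(\mathbf{w}')+\mathbf{0}$ with $\card(\mathbf{w}')=s$. Then at most $s$ of the partials in $\mathbf{w}$ are nonzero, so their rank is at most $s$. By the remark on invariance of $\ess$ under Lemma \ref{lemma:change}, this rank equals $r$, hence $s\ge r$. Together with the construction above, the minimum possible $\card(\tilde{\mathbf{w}})$ is exactly $\ess(\mathbf{f})$.
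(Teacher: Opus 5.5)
Your proof is correct and is essentially the paper's argument: the paper's explicit matrix $A$ (identity on a chosen maximal independent set of partials, followed by the columns $(a_{j1},\dots,a_{jr},0,\dots,0,-1,0,\dots,0)^t$ with $-1$ in position $j$) is a concrete instance of your ``basis of the relation space $V$ extended to a basis of $\mathbb{C}^n$.'' After that, both arguments apply Lemma \ref{lemma:change}(1) and prove minimality by the invariance of $\ess$. Your attention to the domain addresses something the paper passes over, but connectedness alone is not enough for $\partial\tilde{\mathbf{f}}/\partial w_j\equiv 0$ to force global independence of $w_j$, since the fibers in the $w_j$-directions must themselves be connected; the convex or $\mathbb{C}^n$ assumption you settle on is what makes that step go through.
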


\begin{proof}
    \par Suppose that $\ess(\mathbf{f}) = r$. Without loss of generality, we may further suppose that $\{\partial \mathbf{f} / \partial z_1, \cdots, \partial \mathbf{f} / \partial z_r\}$ is a linearly independent set and
    $$
        \frac{\partial \mathbf{f}}{\partial z_j} 
        = a_{j1} \frac{\partial \mathbf{f}}{\partial z_1} + \cdots + a_{jr} \frac{\partial \mathbf{f}}{\partial z_r}, \qquad j = r + 1, \cdots, n.
    $$
    Consider the following invertible matrix
    $$
        A = \left(\begin{array}{cccccccc}
            1 & 0 & \cdots & 0 & a_{(r + 1) 1} & a_{(r + 2) 1} & \cdots & a_{n1} \\
            0 & 1 & \cdots & 0 & a_{(r + 1) 2} & a_{(r + 2) 2} & \cdots & a_{n2} \\
            \vdots & \vdots & & \vdots & \vdots & \vdots & & \vdots \\
            0 & 0 & \cdots & 1 & a_{(r + 1) r} & a_{(r + 2) r} & \cdots & a_{nr} \\
            0 & 0 & \cdots & 0 & -1 & 0 & \cdots & 0 \\
            0 & 0 & \cdots & 0 & 0 & -1 & \cdots & 0 \\
            \vdots & \vdots & & \vdots & \vdots & \vdots & & \vdots \\
            0 & 0 & \cdots & 0 & 0 & 0 & \cdots & -1
        \end{array}\right).
    $$
    By $\tilde{\mathbf{f}}(\mathbf{w})$ we denote the function obtained from $\mathbf{f}(\mathbf{z})$ by the change of variables $\mathbf{z} = A\mathbf{w}$. Then Lemma \ref{lemma:change} shows that
    $$
        \frac{\partial \tilde{\mathbf{f}}}{\partial w_j} 
        = a_{j1}\frac{\partial \mathbf{f}}{\partial z_1} + \cdots + a_{jr}\frac{\partial \mathbf{f}}{\partial z_r} - \frac{\partial \mathbf{f}}{\partial z_j} = 0, \qquad j = r + 1, \cdots, n,
    $$
    and $\left\{\partial \tilde{\mathbf{f}} / \partial w_1, \cdots, \partial \tilde{\mathbf{f}} / \partial w_r\right\}$ is a linearly independent set. Hence, the function $\tilde{\mathbf{f}}(\mathbf{w})$ can be written as
    $$
        \tilde{\mathbf{f}}(\mathbf{w}) = \tilde{\mathbf{f}}_1(w_1, \cdots, w_r) + \mathbf{0}(w_{r + 1}, \cdots, w_n)
    $$
    where $\mathbf{0}(w_{r + 1}, \cdots, w_n)$ is a zero function.

    \par Finally, we prove that $r$ is the minimum. Assume to the contrary that $\mathbf{f}(\mathbf{z})$ is also a direct sum as
    $$
        \mathbf{f}(\mathbf{z}) = \mathbf{f}(B \mathbf{u}) = \bar{\mathbf{f}}_1(\bar{\mathbf{u}}) + \mathbf{0}(\mathbf{u} \setminus \bar{\mathbf{u}}),
    $$
    where $\card(\mathbf{u}) < \card(\mathbf{w}) = r$. Denoting $\bar{\mathbf{f}}(\mathbf{u}) = \mathbf{f}(B \mathbf{u})$, it is clearly seen that $\ess(\bar{\mathbf{f}}) < r$. This is a contradiction since the number of essential variables is invariant under the invertible linear substitutions.
\end{proof}


\par The proof of Proposition \ref{proposition:reduce} yields an algorithm for detecting the degeneracy of a function and calculating a suitable linear substitution to eliminate the ``redundant'' variables when the function is actually degenerate.

\begin{algorithm}
    \label{algorithm:tonondeg}
    \par Let $\mathbf{f}(\mathbf{z})$ be an $n$-variate function.
    \begin{enumerate}[leftmargin = 4em]
        \item[\sc Step 1.] (Detect the degeneracy) Calculate $r = \rank \{\partial \mathbf{f} / \partial z_1, \cdots, \partial \mathbf{f} / \partial z_n\}$. If $r = n$, then $\mathbf{f}(\mathbf{z})$ is nondegenerate and we stop. Otherwise, $\mathbf{f}(\mathbf{z})$ is degenerate and we move to the next step.
        
        \item[\sc Step 2.] (Determine the invertible linear substitution) Suppose that $\{\partial \mathbf{f} / \partial z_1, \cdots, \partial \mathbf{f} / \partial z_r\}$ is a maximal linearly independent subset of $\{\partial \mathbf{f} / \partial z_1, \cdots, \partial \mathbf{f} / \partial z_n\}$. By applying the invertible linear substitution $\mathbf{z} = A \mathbf{w}$ mentioned in the proof of Proposition \ref{proposition:reduce}, we obtain a nondegenerate function $\tilde{\mathbf{f}}(\mathbf{w})$.
    \end{enumerate}
\end{algorithm}

\begin{remark}
    \par Thanks to Algorithm \ref{algorithm:tonondeg}, any degenerate function can be transformed into a nondegenerate one. Therefore, from now on we need only focus on the nondegenerate case.
\end{remark}

\subsection{Examples} 
To elucidate Proposition \ref{proposition:reduce} and Algorithm \ref{algorithm:tonondeg}, we revisit two classical problems in linear algebra and analytic geometry. One is to determine when a quadratic form is a product of two linear forms, the other is when a surface is a cylinder.

\begin{example} \label{example: factoring quadratic forms}
Let $Q(\mathbf{z})=\mathbf{z}^tA\mathbf{z}$ be a nonzero $n$-variate complex quadratic form with Gram matrix $A$. To avoid the trivial situation, we assume $n>2$. It is not hard to see that $\ess(Q)=\rank A.$ If $Q$ can be represented as a product $L_1(\mathbf{z})L_2(\mathbf{z})$ of two linear forms in $\mathbf{z}$, then it can be expressed in at most two variables and obviously $\ess(Q) \le 2$, and so $Q$ is a direct sum. Conversely, if $\ess(Q) \le 2$, then by Proposition \ref{proposition:reduce} $Q$ is a direct sum of a quadratic form in at most two variables and a zero quadratic form, and in this case $Q$ is obviously a produt two linear forms. 
\end{example}

\begin{example} \label{example: cylinder}
    Consider a surface in three-dimensional Euclidean space given by $f(x,y,z)=0$. Then it is a cylinder if and only if $f(x,y,z)$ is degenerate, that is, $\partial f / \partial x, \partial f / \partial y, \partial f / \partial z$ are linearly dependent. For example, let 
    $$
        f(x,y,z)=5x^2+5y^2+2z^2-8xy-2xz-2yz+20x+20y-40z-16
    $$
    It is ready to see that 
    $$
        \frac{\partial f}{\partial x}+\frac{\partial f}{\partial y}+\frac{\partial f}{\partial z}=0.
    $$    
    According to Algorithm \ref{algorithm:tonondeg}, take the change of variables $x=u-w, y=v-w, z=-w$ and get 
    $$
        f=5u^2-8uv+5v^2+20u+20v-16.
    $$
    It follows that this is an elliptic cylinder.
\end{example}

\section{Centers and decompositions of multivariate functions} 
\label{section:centers}
In this section, we introduce an invariant algebra, the so-called center, to tackle the decompositions problem of multivariate vector-valued functions. We provide criteria and algorithms for both direct sum and direct product decompositions.

\subsection{The center of a multivariate vector-valued function} To begin with, we briefly recall the related theories of the center of a set of multivariate polynomials. Let $\left\{f_1(\mathbf{z}), \cdots, f_m(\mathbf{z})\right\}$ be a set of $n$-variate polynomials. Its center is defined as
$$
    \cen(f_1, \cdots, f_m) = \left\{
        X \in \mathbb{C}^{n \times n} : 
        \text{$(H(f_k)X)^t = H(f_k)X$ for all $1 \le k \le m$}
    \right\}.
$$
Here, $H(f_k)$ denotes the Hessian matrix of $f_k$. The main result of this theory is a criterion for decomposition, that is, there is a one-to-one correspondence between the simultaneous direct sum decompositions of $f_1, \cdots, f_m$ and orthogonal idempotent decompositions of the center $\cen(f_1, \cdots, f_m)$, see \cite[Theorem 3.1]{Fang}. These can be extended to all multivariate functions with second-order partial derivatives without barrier.

\begin{definition}
    \par Let $\mathbf{f}(\mathbf{z}) = \left(f_1(\mathbf{z}), \cdots, f_m(\mathbf{z})\right)$ be an $n$-variate function. The center of $\mathbf{f}(\mathbf{z})$ is defined to be 
    $$
        \cen(\mathbf{f}) = \left\{X \in \mathbb{C}^{n \times n} : \text{$(H(f_k) X)^t = H(f_k) X$ for all $1 \le k \le m$}\right\},
    $$
    where $H(f_k)$ is the Hessian matrix of the component $f_k$.
\end{definition}

\begin{remark}
    \par The center $\cen(\mathbf{f})$ is a $\mathbb{C}$-subspace but not necessarilly a $\mathbb{C}$-subalgebra of the full matrix algebra $\mathbb{C}^{n \times n}$ since $\cen(\mathbf{f})$ may not be closed under the matrix multiplication. Indeed, $\cen(\mathbf{f})$ is a Jordan algebra  \cite{Mcc} whose multiplication $*$ is:
    $$
        * : \cen(\mathbf{f}) \times \cen(\mathbf{f}) \to \cen(\mathbf{f}),
        \qquad (x, y) \mapsto \frac{x y + y x}{2}.
    $$
\end{remark}

\begin{remark}
    \label{remark:center}
    \par Suppose that $\tilde{\mathbf{f}}(\mathbf{w}) = \left(\tilde{f}_1(\mathbf{w}), \cdots, \tilde{f}_m(\mathbf{w})\right)$ is the $n$-variate function obtained from $\mathbf{f}(\mathbf{z})$ by the invertible linear substitution $\mathbf{z} = A \mathbf{w}$. It follows from Lemma \ref{lemma:change} that 
    $$
        H(\tilde{f}_k) = A^t H(f_k) A, \qquad k = 1, \cdots, m.
    $$
    Hence, we have
    $$
        \begin{aligned}
            \cen(\tilde{\mathbf{f}}) &= \left\{Y \in \mathbb{C}^{n \times n} : \text{$(H(\tilde{f}_k)Y)^t = H(\tilde{f}_k)Y$ for all $1 \le k \le m$}\right\} \\
            &= \left\{Y \in \mathbb{C}^{n \times n} : \text{$(A^t H(f_k) AY)^t = A^t H(f_k) A Y$ for all $1 \le k \le m$}\right\} \\
            &= \left\{Y \in \mathbb{C}^{n \times n} : \text{$(H(f_k) AYA^{-1})^t = H(f_k) A Y A^{-1}$ for all $1 \le k \le m$}\right\} \\
            &= \left\{Y \in \mathbb{C}^{n \times n} : AYA^{-1} \in \cen(\mathbf{f})\right\} \\
            &= A^{-1}\cen(\mathbf{f})A.
        \end{aligned}
    $$
\end{remark}

\subsection{Direct sum decompositions for vector-valued functions} By utilizing the center of functions, we are able to deal with the direct sum decompositions of multivariate vector-valued functions.

\begin{lemma}
    \label{lemma:separate}
    \par Let $f(\mathbf{z})$ be an $n$-variate scalar-valued function. Then, it can be represented as the following form 
    $$
        f(\mathbf{z}) = f_1(\mathbf{z} \setminus \{z_i\}) + f_2(\mathbf{z} \setminus \{z_j\})
    $$
    if and only if $\frac{\partial^2 f(\mathbf{z})}{\partial z_i \partial z_j} = 0$.
\end{lemma}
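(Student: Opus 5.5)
The forward direction is a direct computation, and the converse is a two-step integration. Throughout I assume $i \neq j$ and that $f$ has continuous second-order partials on a domain that is convex, or at least convex in each coordinate direction, for example all of $\mathbb{C}^n$ or a polydisc. This is the implicit analytic setting of the paper.

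\emph{Necessity.} Suppose $f(\mathbf{z}) = f_1(\mathbf{z}\setminus\{z_i\}) + f_2(\mathbf{z}\setminus\{z_j\})$. Then $\partial f/\partial z_i = \partial f_2/\partial z_i$, because $f_1$ does not involve $z_i$. This derivative is still a function not involving $z_j$, so differentiating in $z_j$ gives $\partial^2 f/\partial z_i\partial z_j = 0$.

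\emph{Sufficiency.} Assume $\partial^2 f/\partial z_i\partial z_j = 0$ and fix a base value $z_i^0$ in the domain.
\begin{itemize}
\item Set $g(\mathbf{z}) = \partial f/\partial z_i(\mathbf{z})$. Since $\partial g/\partial z_j = 0$, $g$ is independent of $z_j$ along segments in the $z_j$-direction, so $g = g(\mathbf{z}\setminus\{z_j\})$.
\item Define $f_2(\mathbf{z}\setminus\{z_j\}) = \int_{z_i^0}^{z_i} g(\ldots,t,\ldots)\,dt$, integrating in the $i$-th slot along a path in the $z_i$-variable. This is a function independent of $z_j$ with $\partial f_2/\partial z_i = g$.
\item Put $f_1 = f - f_2$. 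Then $\partial f_1/\partial z_i = 0$, so $f_1$ is independent of $z_i$, i.e.\ $f_1 = f_1(\mathbf{z}\setminus\{z_i\})$.
\end{itemize}
This gives the required representation $f = f_1 + f_2$.

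\emph{Main obstacle.} The only delicate point is the analytic justification that a vanishing partial derivative forces independence of that variable. This needs connectedness of the domain's slices in that coordinate direction. For holomorphic $f$, path-independence of the integral also requires the domain's slices in the $z_i$-direction to be simply connected; a convex domain or a product domain suffices.

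For complex variables I would treat $f$ as holomorphic, so $\partial/\partial z$ is the complex derivative and the integral is a contour integral along a straight segment. Alternatively, one could formally write $f_2(\mathbf{z}\setminus\{z_j\}) = f(\mathbf{z})|_{z_j = z_j^0} - f(\mathbf{z})|_{z_i=z_i^0,\,z_j=z_j^0}$ and $f_1 = f|_{z_i=z_i^0}$. Verifying this route reduces to the identity
\[
f(z_i,z_j) - f(z_i^0,z_j) - f(z_i,z_j^0) + f(z_i^0,z_j^0) = 0,
\]
with the other variables suppressed. This identity follows by integrating the mixed partial over the rectangle. I would present this explicit formula as the cleaner route.
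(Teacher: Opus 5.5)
Your proof is correct and follows the paper's route. Both show that $\partial f/\partial z_i$ does not depend on $z_j$, integrate it in $z_i$ to get the $z_j$-free summand, and note that the remainder has vanishing $z_i$-derivative; your base-point formula is just this integration written out explicitly. One qualification on your domain hypothesis, which already goes beyond the paper: a fixed base value $z_i^0$ and the rectangle identity need a product of convex sets (e.g.\ $\mathbb{C}^n$ or a polydisc), not an arbitrary convex domain.
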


\begin{proof}
    \par The ``only if'' part follows immediately by taking the partial derivative of $f(\mathbf{z})$ with respect to $z_i$ and $z_j$ successively. As for the ``if'' part, by integrating $\partial^2 f / \partial z_i \partial z_j$ with respect to $z_j$, we obtain
    $$
        \frac{\partial f(\mathbf{z})}{\partial z_i} = c_0(\mathbf{z} \setminus \{z_j\})
    $$
    for some $(n - 1)$-variate function $c_0$. Then, by taking the integral of $\partial f / \partial z_i$ with respect to $z_i$, we have
    $$
        f(\mathbf{z}) = \tilde{c}_0(\mathbf{z} \setminus \{z_j\}) + c_1(\mathbf{z} \setminus \{z_i\})
    $$
    for some $(n - 1)$-variate functions $\tilde{c}_0$ and $c_1$.
\end{proof}

\par The following theorem provides a criterion for determining whether a multivariate vector-valued function is a direct sum.

\begin{theorem}
    \label{theorem:mainthm}
    \par Let $\mathbf{f}(\mathbf{z}) = \left(f_1(\mathbf{z}), \cdots, f_m(\mathbf{z})\right)$ be an $n$-variate function, and let $A \in \gl_n(\mathbb{C})$. Then the following statements are equivalent.
    \begin{enumerate}
        \item[\upshape(1)] By the linear change of variables $\mathbf{z} = A \mathbf{w}$, we have
        $$
            \mathbf{f}(A\mathbf{w}) := \tilde{\mathbf{f}}(\mathbf{w}) = \tilde{\mathbf{f}}(\mathbf{w}_1) + \tilde{\mathbf{f}}(\mathbf{w}_2),
        $$
        where $\mathbf{w} = \mathbf{w}_1 \sqcup \mathbf{w}_2$ and $\card(\mathbf{w}_j) = n_j$.

        \item[\upshape(2)] For each $k \in \{1, \cdots, m\}$, we have
        $$
            A^{t} H(f_k) A = \left(\begin{array}{cc}
                G_{k1} & \\ & G_{k2}
            \end{array}\right),
        $$
        where $G_{kj}$ is an $n_j \times n_j$ matrix.

        \item[\upshape(3)] The identity of $\cen(\mathbf{f})$ is a sum $e_1 + e_2$ of a pair of orthogonal idempotents in $\cen(\mathbf{f})$, and
        $$
            A^{-1} e_1 A = \left(\begin{array}{cc}
                \id_{n_1} & \\ & 0
            \end{array}\right) \quad \text{and} \quad
            A^{-1} e_2 A = \left(\begin{array}{cc}
                0 & \\ & \id_{n_2}
            \end{array}\right).
        $$
    \end{enumerate}
\end{theorem}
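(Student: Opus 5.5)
The plan is to prove (1)$\Leftrightarrow$(2) and then (2)$\Leftrightarrow$(3). In both halves we pass to the new coordinates $\mathbf{w}$ through Lemma \ref{lemma:change}(2) and Remark \ref{remark:center}. These give $H(\tilde f_k)=A^tH(f_k)A$ and $\cen(\tilde{\mathbf{f}})=A^{-1}\cen(\mathbf{f})A$. After this reduction it suffices to treat the case $A=\id$, with $\mathbf{w}_1=\{w_1,\dots,w_{n_1}\}$ and $\mathbf{w}_2$ the remaining variables.

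For (1)$\Rightarrow$(2), each $\tilde f_k=\tilde f_{k1}(\mathbf{w}_1)+\tilde f_{k2}(\mathbf{w}_2)$ satisfies $\partial^2\tilde f_k/\partial w_i\partial w_j=0$ whenever $w_i\in\mathbf{w}_1$ and $w_j\in\mathbf{w}_2$. Hence $H(\tilde f_k)=A^tH(f_k)A$ is block diagonal.

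For (2)$\Rightarrow$(1), the mixed partials $\partial^2\tilde f_k/\partial w_i\partial w_j$ vanish for all $i\le n_1<j$. We apply Lemma \ref{lemma:separate} repeatedly, or more directly integrate as in its proof. Write $\mathbf{w}_1=(w_1,\dots,w_{n_1})$ and $\mathbf{w}_2=(w_{n_1+1},\dots,w_n)$. For each $i\le n_1$, the derivative $\partial\tilde f_k/\partial w_i$ is independent of all of $\mathbf{w}_2$. Fix a base point $\mathbf{w}_2^0$, which requires a connected, product-shaped domain; this is the analytic hypothesis the paper treats as "minor". Then $\tilde f_k(\mathbf{w}_1,\mathbf{w}_2)-\tilde f_k(\mathbf{w}_1,\mathbf{w}_2^0)$ has zero gradient in $\mathbf{w}_1$, so it depends only on $\mathbf{w}_2$. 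This gives $\tilde f_k=g_{k1}(\mathbf{w}_1)+g_{k2}(\mathbf{w}_2)$. I expect this integration step, done for a whole block of variables rather than the single pair $z_i,z_j$ in Lemma \ref{lemma:separate}, to be the only point needing care. It is still elementary.

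For (2)$\Rightarrow$(3), set $E_1=\diag(\id_{n_1},0)$ and $E_2=\diag(0,\id_{n_2})$. If $H(\tilde f_k)=\diag(G_{k1},G_{k2})$ with each $G_{kj}$ symmetric (Hessians are symmetric), then $H(\tilde f_k)E_j$ equals $\diag(G_{k1},0)$ or $\diag(0,G_{k2})$. Both are symmetric, so $E_1,E_2\in\cen(\tilde{\mathbf{f}})$. They are orthogonal idempotents summing to $\id$. Conjugating back, $e_j:=AE_jA^{-1}\in\cen(\mathbf{f})$ are orthogonal idempotents with $e_1+e_2=\id$, and $A^{-1}e_jA=E_j$ as required.

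For (3)$\Rightarrow$(2), we have $E_1=A^{-1}e_1A\in\cen(\tilde{\mathbf{f}})$. Write $H(\tilde f_k)=\begin{pmatrix}P&Q\\Q^t&R\end{pmatrix}$ with $P,R$ symmetric. Then $H(\tilde f_k)E_1=\begin{pmatrix}P&0\\Q^t&0\end{pmatrix}$, and symmetry of this matrix forces $Q=0$. Thus $H(\tilde f_k)$ is block diagonal, which is (2), and this completes the cycle of equivalences.
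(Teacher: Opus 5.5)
Your proposal is correct and follows essentially the paper's argument: block-diagonality of $H(\tilde f_k)=A^tH(f_k)A$ via Lemma \ref{lemma:change}, membership of $\diag(\id_{n_1},0)$ and $\diag(0,\id_{n_2})$ in $\cen(\tilde{\mathbf{f}})=A^{-1}\cen(\mathbf{f})A$, and integration to recover the separation of variables. The only real difference is in the separation step: where the paper's (3)$\Rightarrow$(1) uses a double induction with Lemma \ref{lemma:separate}, handling one pair of variables at a time, you integrate over the whole block from a base point $\mathbf{w}_2^0$, which is shorter and makes explicit the connected product-domain hypothesis that the paper leaves implicit.
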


\begin{proof}
    \par  By $\tilde{\mathbf{f}}(\mathbf{w}) = (\tilde{f}_1(\mathbf{w}), \cdots, \tilde{f}_m(\mathbf{w}))$ we denote the function obtained from $\mathbf{f}(\mathbf{z})$ by the substitution $\mathbf{z} = A \mathbf{w}$. Lemma \ref{lemma:change} shows that 
    $$
        H(\tilde{f}_k) = A^t H(f_k) A, \qquad k = 1, \cdots, m.
    $$ 
    
    \par (1) implies (2). Since each component $\tilde{f}_k(\mathbf{w})$ of $\tilde{\mathbf{f}}(\mathbf{w})$ is separable as $\tilde{f}_{k1}(\mathbf{w}_1) + \tilde{f}_{k2}(\mathbf{w}_2)$, the Hessian matrix of $\tilde{f}_k$ is block-diagonal as
    $$
        H(\tilde{f}_k) = \left(\begin{array}{cc}
            G_{k1} & \\ & G_{k2}
        \end{array}\right),
    $$
    where $G_{kj}$ is a square matrix of order $n_j$.

    \par (2) implies (3). The assumption of (2) implies that for each $k \in \{1, \cdots, m\}$,
    $$
        \varepsilon_1 = \left(\begin{array}{cc}
            \id_{n_1} & \\ & 0
        \end{array}\right), \quad
        \varepsilon_1 = \left(\begin{array}{cc}
            0 & \\ & \id_{n_2}
        \end{array}\right) \in \cen(\tilde{\mathbf{f}})
    $$
    By Remark \ref{remark:center}, $e_1 = A \varepsilon_1 A^{-1}$ and $e_2 = A \varepsilon_2 A^{-1}$ is a pair of orthogonal idempotents in $\cen(\mathbf{f})$ such that $\id_n = e_1 + e_2$.

    \par (3) implies (1). For each component $\tilde{f_k}(\mathbf{w})$ of $\tilde{\mathbf{f}}(\mathbf{w})$, we have
    $$
        \left(H(\tilde{f}_k) \varepsilon_i\right)^t = H(\tilde{f}_k) \varepsilon_i, 
        \quad i = 1, 2.
    $$
    This shows that $H(\tilde{f}_k)$ is block-diagonal as in condition (2). We introduce the notation $\alpha(\mathbf{w})$ to denote a function in the variable set $\mathbf{w}$. By Lemma \ref{lemma:separate}, we have
    $$
        \frac{\partial^2 \tilde{f}_k}{\partial w_1 \partial w_{n_1 + 1}} = 0
        \implies \tilde{f}_k(\mathbf{w}) 
        = \alpha(\mathbf{w} \setminus \{w_1\}) + \alpha(\mathbf{w} \setminus \{w_{n_1 + 1}\}).
    $$
    The proof requires two successive applications of induction. For the first one, suppose that we have shown
    $$
        \tilde{f}_k(\mathbf{w}) 
        = \alpha(\mathbf{w} \setminus \{w_1\}) + \alpha(\mathbf{w} \setminus \{w_{n_1 + 1}, \cdots, w_{n_1 + a}\}),
    $$
    Then $\partial^2 \tilde{f}_k / \partial w_1 \partial w_{n_1 + a + 1}$ = 0 implies that
    $$
        \begin{aligned}
            \tilde{f}_k(\mathbf{w}) 
            &= \alpha(\mathbf{w} \setminus \{w_1\}) + \alpha(\mathbf{w} \setminus \{w_{n_1 + 1}, \cdots, w_{n_1 + a}\}) \\ 
            &= \alpha(\mathbf{w} \setminus \{w_1\}) + \alpha(\mathbf{w} \setminus \{w_1, w_{n_1 + 1}, \cdots, w_{n_1 + a}\}) + \alpha(\mathbf{w} \setminus \{w_{n_1 + 1}, \cdots, w_{n_1 + a + 1}\}) \\
            &= \alpha(\mathbf{w} \setminus \{w_1\}) + \alpha(\mathbf{w} \setminus \{w_{n_1 + 1}, \cdots, w_{n_1 + a + 1}\}) \\
        \end{aligned}
    $$
    The second induction is based on the situation
    $$
        \tilde{f}_k(\mathbf{w}) = \alpha(\mathbf{w} \setminus \{w_1\}) + \alpha(\mathbf{w} \setminus \mathbf{w}_2).
    $$
    Similarly, we will prove $\tilde{f}_k(\mathbf{w}) = \alpha(\mathbf{w} \setminus \mathbf{w}_1) + \alpha(\mathbf{w} \setminus \mathbf{w}_2) = \alpha(\mathbf{w}_2) + \alpha(\mathbf{w}_1)$.
\end{proof}

\begin{remark}
    \par If we require the matrix $A$ in Theorem \ref{theorem:mainthm} to be unitary, then the idempotents $e_1, e_2$ appearing in condition (3) should additionally be required to be hermitian.
\end{remark}

\begin{corollary}
    \label{corollary:dsdecomp}
    \par Let $\mathbf{f}(\mathbf{z}) = (f_1(\mathbf{z}), \cdots, f_m(\mathbf{z}))$ be an $n$-variate function, and $A \in \gl_n(\mathbb{C})$. Then the following statements are equivalent.
    \begin{enumerate}
        \item[\upshape(1)] By the linear substitution $\mathbf{z} = A \mathbf{w}$, we have
        $$
            \mathbf{f}(\mathbf{z}) = \mathbf{f}(A\mathbf{w}) = \tilde{\mathbf{f}}_1(\mathbf{w}_1) + \cdots + \tilde{\mathbf{f}}_p(\mathbf{w}_p),
        $$
        where $\card(\mathbf{w}_j) = n_j$ and each $\tilde{\mathbf{f}}_k$ is not a direct sum function.

        \item[\upshape(2)] For each $k \in \{1, \cdots, m\}$,
        $$
            A^t H(f_k) A = \left(\begin{array}{ccc}
                G_{k1} & & \\ & \ddots & \\ & & G_{kp}
            \end{array}\right),
        $$
        where $G_{kj}$ is an $n_j \times n_j$ matrix. Moreover, for each block $G_{kj}$, there exists no invertible matrix $\hat{A} \in \mathbb{C}^{n_j \times n_j}$ such that $\hat{A}^t G_{kj} \hat{A}$ is block-diagonal.

        \item[\upshape(3)] The identity of $\cen(\mathbf{f})$ is a sum $e_1 + \cdots + e_p$ of $p$ primitive orthogonal idempotents in $\cen(\mathbf{f})$, and 
        $$
            A^{-1} e_j A = \left(\begin{array}{ccc}
                \delta_{1j} \id_{n_1} & & \\
                & \cdots & \\
                & & \delta_{pj} \id_{n_p}
            \end{array}\right),
            \qquad j = 1, \cdots, p,
        $$
        where $\delta_{ij}$ is the Kronecker delta.
    \end{enumerate}
\end{corollary}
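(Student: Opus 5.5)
The plan is to extend Theorem \ref{theorem:mainthm} from two summands to $p$ summands. We treat the block-decomposition parts by induction on $p$ and the indecomposability clauses by a separate, parallel argument.

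\textbf{Decomposition parts.} Write $\tilde{\mathbf{f}}(\mathbf{w})=\mathbf{f}(A\mathbf{w})$. By Lemma \ref{lemma:change}, $H(\tilde f_k)=A^tH(f_k)A$, and by Remark \ref{remark:center}, $\cen(\tilde{\mathbf{f}})=A^{-1}\cen(\mathbf{f})A$.

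For (1)$\Rightarrow$(2), note that a sum of functions in disjoint variable blocks has block-diagonal Hessian, exactly as in the two-block case.

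For (2)$\Rightarrow$(3), the matrices $\varepsilon_j=\diag(\delta_{1j}\id_{n_1},\dots,\delta_{pj}\id_{n_p})$ commute with block-diagonal symmetric matrices. Hence $H(\tilde f_k)\varepsilon_j$ is symmetric, so $\varepsilon_j\in\cen(\tilde{\mathbf{f}})$, and $e_j=A\varepsilon_jA^{-1}$ are orthogonal idempotents in $\cen(\mathbf{f})$ summing to $\id_n$.

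For (3)$\Rightarrow$(1), we apply Theorem \ref{theorem:mainthm} to the pair $e_1$ and $e_2+\cdots+e_p$. This splits off $\tilde{\mathbf{f}}_1(\mathbf{w}_1)$, leaving a remainder in the variables $\mathbf{w}_2\sqcup\cdots\sqcup\mathbf{w}_p$. Its Hessians are the lower-right blocks, which are centralized by the restrictions of $\varepsilon_2,\dots,\varepsilon_p$. Induction on $p$ then gives the full splitting. Equivalently, one can iterate Lemma \ref{lemma:separate} over all pairs of variables lying in distinct blocks, as in the proof of Theorem \ref{theorem:mainthm}.

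\textbf{Indecomposability clauses.} The key step is the following observation, which holds for each $j$. The center of the summand $\tilde{\mathbf{f}}_j(\mathbf{w}_j)$, viewed as a function of $n_j$ variables, equals
\[
\{Y\in\mathbb{C}^{n_j\times n_j}: (G_{kj}Y)^t=G_{kj}Y \text{ for all } k\}.
\]
This is because the Hessian of the $k$-th component of $\tilde{\mathbf{f}}_j$ is exactly $G_{kj}$.

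Applying Theorem \ref{theorem:mainthm} to $\tilde{\mathbf{f}}_j$ alone, we get three equivalent conditions:
\begin{itemize}
\item $\tilde{\mathbf{f}}_j$ is a direct sum;
\item there is a single $\hat A$ making every $\hat A^tG_{kj}\hat A$ block-diagonal with the same nontrivial block sizes;
\item the identity of $\cen(\tilde{\mathbf{f}}_j)$ splits into two nonzero orthogonal idempotents.
\end{itemize}
This gives the equivalence of the three ``not a direct sum'' conditions, provided (2) is read as simultaneous block-diagonalization over all $k$, which is the natural reading.

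It remains to identify primitivity of $e_j$ in $\cen(\mathbf{f})$ with primitivity of $\id_{n_j}$ in $\cen(\tilde{\mathbf{f}}_j)$. We do this with the corner-algebra fact
\[
\varepsilon_j\,\cen(\tilde{\mathbf{f}})\,\varepsilon_j \cong \cen(\tilde{\mathbf{f}}_j),
\]
together with conjugation by $A$.

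\textbf{Main obstacle.} The delicate step is this corner identification, since $\cen$ is only a Jordan algebra. If $Y\in\cen(\tilde{\mathbf{f}})$, then its $(j,j)$ block satisfies the symmetry condition against $G_{kj}$. Conversely, any $Y_j$ in the center of $\tilde{\mathbf{f}}_j$, padded by zeros, lies in $\cen(\tilde{\mathbf{f}})$. An idempotent $u\le\varepsilon_j$ in $\cen(\tilde{\mathbf{f}})$, meaning $u\varepsilon_j=\varepsilon_ju=u$, is supported on the $j$-th block. So decompositions of $e_j$ into orthogonal idempotents correspond exactly to decompositions of $\id_{n_j}$ in $\cen(\tilde{\mathbf{f}}_j)$.

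I expect to check this using only the products $\varepsilon_j u\varepsilon_j$, which is legitimate because all the idempotents involved are honest matrices.
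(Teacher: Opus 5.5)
Your argument is correct and follows the route the paper intends. The paper gives no separate proof and treats the corollary as an iteration of Theorem \ref{theorem:mainthm}, so your induction on $p$, together with applying Theorem \ref{theorem:mainthm} to each summand $\tilde{\mathbf{f}}_j$, fills it in. Your identification of the idempotents below $e_j$ in $\cen(\mathbf{f})$ with the idempotents of $\cen(\tilde{\mathbf{f}}_j)$ supplies the primitivity step the paper leaves implicit. Your simultaneous-in-$k$ reading of the indecomposability clause in (2) is the one actually needed: read literally for a single block $G_{kj}$, the implication (1)$\Rightarrow$(2) fails, for example for $\mathbf{f}=(z_1^3+z_2^3,\,(z_1+z_2)^3+(z_1-z_2)^3)$ with $A=\id_2$.
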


\par For a given vector-valued function, there is an algorithm for determining whether it is a direct sum and calculating a direct sum decomposition if this is the case.

\begin{algorithm}
    \label{algorithm:dsdecomp}
    \par Let $\mathbf{f}(\mathbf{z}) = (f_1(\mathbf{z}), \cdots, f_m(\mathbf{z}))$ be a vector-value function.
    \begin{enumerate}[leftmargin = 4em]
        \item[\sc Step 1.] (Compute the center) Solve the following system of matrix equations for the unknown $X \in \mathbb{C}^{n \times n}$:
        $$
            (H(f_k)X)^t = H(f_k)^t X, \qquad k = 1, \cdots, m.
        $$
        The center $\cen(\mathbf{f})$ is the solution space of the above system.
        
        \item[\sc Step 2.] (Find a non-trivial idempotent) Choose a basis $\{B_1, \cdots, B_r\}$ for the space $\cen(\mathbf{f})$. Consider the equation 
        $$
            \left(\sum_{i = 1}^{r} x_i B_i\right)^2 = \sum_{i = 1}^{r} x_i B_i
        $$
        for the unknowns $x_i \in \mathbb{C}$. If it admits a solution $(x_1, \cdots, x_r)$ such that $\sum_{i = 1}^{r} x_i B_i$ is neither $0$ nor $\id_n$, then we obtain a non-trivial idempotent $e = \sum_{i = 1}^{r} x_i B_i$ and move to the next step. Otherwise, the function $\mathbf{f}(\mathbf{z})$ is not a direct sum and we stop.

        \item[\sc Step 3.] (Decompose the function) Note that $(e, \id_n - e)$ is a pair of non-trivial orthogonal idempotents in $\cen(\mathbf{f})$. According to Theorem \ref{theorem:mainthm}(3), we construct the following matrix
        $$
            A = \left(A_1^{(1)}, \cdots, A_{n_1}^{(1)}, A_1^{(2)}, \cdots, A_{n_2}^{(2)}\right) \in \gl(n, \mathbb{C}),
        $$
        where $\{A_i^{(1)}\}_{i = 1}^{n_1}$ and $\{A_i^{(2)}\}_{i = 1}^{n_2}$ are bases of the eigenspaces with respect to the eigenvalue $1$ for $e$ and $\id_n - e$. Applying the change of variables $\mathbf{z} = A \mathbf{w}$, we obtain
        $$
            \mathbf{f}(\mathbf{z}) = \mathbf{f}(A\mathbf{w}) = \tilde{\mathbf{f}}_1(\mathbf{w}_1) + \tilde{\mathbf{f}}_2(\mathbf{w}_2),
        $$
        where $\card(\mathbf{w}_k) = n_k$ and $\mathbf{w}$ is the disjoint union of $\mathbf{w}_1$ and $\mathbf{w}_2$. Then replace $\mathbf{f}(\mathbf{z})$ for the subsequent iterations by each summand $\tilde{\mathbf{f}}_i$ successively.
    \end{enumerate}
\end{algorithm}

\begin{example}
    \par Consider the vector-valued function $\mathbf{f}(z_1, z_2, z_3) = (f_1, f_2, f_3)$ where
    $$
        \begin{aligned}
            f_1 &= \frac{- z_{1}^{2} z_{2} + z_{1}^{2} z_{3} - 2 z_{1} z_{2}^{2} - z_{1} z_{2} z_{3} + 3 z_{1} z_{3}^{2} + z_{1} - 6 z_{2}^{2} z_{3} + 6 z_{2} z_{3}^{2} - z_{2} + 4 z_{3}}{- z_{1} z_{2} + z_{1} z_{3} - 3 z_{2} z_{3} + 3 z_{3}^{2}},
            \vspace{1ex} \\
            f_2 &= \frac{- z_{1}^{2} z_{2} + z_{1}^{2} z_{3} - 2 z_{1} z_{2}^{2} - z_{1} z_{2} z_{3} + 3 z_{1} z_{3}^{2} + z_{1} - 6 z_{2}^{2} z_{3} + 6 z_{2} z_{3}^{2} + z_{2} + z_{3}}{- z_{1} z_{2} + z_{1} z_{3} - 2 z_{2}^{2} + 2 z_{2} z_{3}},
            \vspace{1ex} \\
            f_3 &= \frac{- z_{1}^{2} z_{2} + z_{1}^{2} z_{3} - 2 z_{1} z_{2}^{2} - z_{1} z_{2} z_{3} + 3 z_{1} z_{3}^{2} + 2 z_{1} - 6 z_{2}^{2} z_{3} + 6 z_{2} z_{3}^{2} + 2 z_{2} + 3 z_{3}}{z_{1}^{2} + 2 z_{1} z_{2} + 3 z_{1} z_{3} + 6 z_{2} z_{3}}.
        \end{aligned}
    $$
    A direct calculation yields:
    $$
        \cen(\mathbf{f})
        = \mathbb{C} \left(\begin{array}{rrr}
            -2/3 & 0 & -2 \\
            1/3 & 0 & 1 \\
            1/3 & 0 & 1
        \end{array}\right) + \mathbb{C} \left(\begin{array}{rrr}
            5/3 & 0 & 2 \\
            -1/3 & 1 & -1 \\
            -1/3 & 0 & 0
        \end{array}\right) + \mathbb{C} \left(\begin{array}{rrr}
            -17/3 & -3 & -2 \\
            5/6 & 0 & 1 \\
            5/6 & 1 & 0
        \end{array}\right).
    $$
    Note that 
    $$
        e_1 = \left(\begin{array}{rrr}
            -2 & 0 & -6 \\
            1 & 0 & 3 \\
            1 & 0 & 3
        \end{array}\right) \quad \text{and} \quad
        e_2 = \id_3 - e_1 = \left(\begin{array}{rrr}
            3 & 0 & -6 \\
            -1 & 1 & -3 \\
            1 & 0 & -2
        \end{array}\right)
    $$
    are a pair of orthogonal idempotents in $\cen(\mathbf{f})$. Hence, we take the invertible linear substitution
    $$
        \mathbf{z} = \left(\begin{array}{rrr}
            -2 & 0 & -3 \\
            1 & 1 & 0 \\
            1 & 0 & 1
        \end{array}\right) \mathbf{w}
    $$
    where the first column is an eigenvector with respect to $1$ for $e_1$ and the remainder are for $e_2$. Under this substitution, we obtain
    $$
        \mathbf{f}(z_1, z_2, z_3) = \mathbf{g}_1(w_1) + \mathbf{g}_2(w_2, w_3)
    $$
    where 
    $$
        \mathbf{g}_1(w_1) = \left(
            \frac{1}{w_1}, w_1, \frac{1}{w_1}
        \right)
    $$
    and
    $$
        \mathbf{g}_2(w_2, w_3) = \left(
            \frac{1}{- w_2 + w_3} + 2 w_2 - 3 w_3, 
            \frac{1}{- w_2 + w_3} + \frac{1}{2 w_2 + 3 w_3},
            \frac{1}{- w_2 + w_3} - w_2 + w_3
        \right)
    $$
    Applying a similar discussion to $\mathbf{g}_2$, we finally obtain
    $$
        \mathbf{f}(z_1, z_2, z_3) 
        = \left(v_1, \frac{1}{v_1}, \frac{1}{v_1}\right) 
        + \left(\frac{1}{v_2}, v_2, \frac{1}{v_2}\right) 
        + \left(\frac{1}{v_3}, \frac{1}{v_3}, v_3\right),
    $$
    where $v_1 = z_1 + 2 z_2$, $v_2 = z_1 + 3 z_3$ and $v_3 = - z_2 + z_3$.
\end{example}
\begin{example}
    \par Consider the affine variety $V \subset \mathbb{A}^4$ defined by the following polynomials
    $$
        \begin{aligned}
            f_1(\mathbf{x}) &= x_{1}^{2} + 4 x_{1} x_{2} + 5 x_{2}^{2} - 2 x_{2} x_{3} + 2 x_{2} x_{4} + x_{3}^{2} - 2 x_{3} x_{4} + x_{4}^{2} - 2, \\
            f_2(\mathbf{x}) &= x_{2}^{2} - 2 x_{2} x_{4} + 4 x_{3}^{2} + 4 x_{3} x_{4} + 2 x_{4}^{2} - 2.
        \end{aligned}
    $$
    The center of this pair $(f_1, f_2)$ of polynomials (We can regard it as a vector valued function) admits a $\mathbb{C}$-basis as follow:
    $$
        \begin{array}{lll}
            B_1 = \left(\begin{array}{rrrr}
                1/2 & 1 & 0 & 0\\0 & 0 & 0 & 0\\0 & 0 & 0 & 0\\0 & 0 & 0 & 0
            \end{array}\right), &
            B_2 = \left(\begin{array}{rrrr}
                1 & 0 & -8 & -2\\0 & 1 & 4 & 1\\0 & 1 & 4 & 1\\0 & 0 & 0 & 0
            \end{array}\right), &
            B_3 = \left(\begin{array}{rrrr}
                3/4 & 0 & - 21/2 & 13/2\\1 & 4 & 4 & -2\\- 1/2 & 1 & 2 & 0\\1 & 0 & 0 & 0
            \end{array}\right), \\
            B_4 = \left(\begin{array}{rrrr}
                -1 & 0 & 4 & -2\\0 & -1 & -2 & 1\\0 & -1 & -1 & 0\\0 & 1 & 0 & 0
            \end{array}\right), &
            B_5 = \left(\begin{array}{rrrr}
                - 3/2 & 0 & 6 & 1\\0 & - 3/2 & -3 & - 1/2\\0 & -1 & - 5/2 & 0\\0 & 0 & 1 & 0
            \end{array}\right), &
            B_6 = \left(\begin{array}{rrrr}
                1 & 0 & 0 & 0\\0 & 1 & 0 & 0\\0 & 0 & 1 & 0\\0 & 0 & 0 & 1
            \end{array}\right).
        \end{array}
    $$
    One can check that $(B_1, \id_4 - B_1)$ is a pair of non-trivial idempotents of $\cen(f_1, f_2)$. By computing the eigenvectors of these two idempotents, we obtain a linear substitution
    $$
        \mathbf{x} = \left(\begin{array}{rrrr}
            1 & - 4/5 & - 2/5 & - 6/5 \\
            0 & 2/5 & 1/5 & 3/5 \\
            0 & - 1/5 & 2/5 & 1/5 \\
            0 & 2/5 & 1/5 & - 2/5
        \end{array}\right) \mathbf{y}.
    $$
    Under this substitution, we have
    $$
        (f_1, f_2) = (y_1^2 + y_2^2 - 2, y_3^2 + y_4^2 - 2),
    $$
    and this means that $V$ is isomorphic to a product of two circles in $\mathbb{A}^2$ as affine variety.
\end{example}


\subsection{Direct product decompositions for vector-valued functions} In Remark \ref{remark:dstodp}, we have shown that the direct product decomposition of a function is equivalent to the direct sum decomposition of its logarithmization. Hence, we have the following criterion for direct product functions, which follows immediately from Corollary \ref{corollary:dsdecomp}.

\begin{corollary}
    \label{corollary:dpdecomp}
    \par Let $\mathbf{f}(\mathbf{z}) = (f_1(\mathbf{z}), \cdots, f_m(\mathbf{z}))$ be an $n$-variate function, and $A \in \gl_n(\mathbb{C})$. Then the following statements are equivalent.
    \begin{enumerate}
        \item[\upshape(1)] The function $\mathbf{f}(\mathbf{z})$ is a direct product, that is, there exists an invertible linear substitution $\mathbf{z} = A \mathbf{w}$ such that
        $$
            \mathbf{f}(\mathbf{z}) = \mathbf{f}(A\mathbf{w}) = \tilde{\mathbf{f}}_1(\mathbf{w}_1) \odot \cdots \odot \tilde{\mathbf{f}}_p(\mathbf{w}_p),
        $$
        where $\card(\mathbf{w}_j) = n_j$ and each $\tilde{\mathbf{f}}_k$ is not a direct product function.

        \item[\upshape(2)] There exists an invertible matrix $A$ such that
        $$
            A^t H(\log f_k) A = \left(\begin{array}{ccc}
                G_{k1} & & \\ & \ddots & \\ & & G_{kp}
            \end{array}\right),
            \qquad k = 1, \cdots, m,
        $$
        where $G_{kj}$ is an $n_j \times n_j$ matrix. Moreover, for each block $G_{kj}$, there exists no invertible matrix $\hat{A} \in \mathbb{C}^{n_j \times n_j}$ such that $\hat{A}^t G_{kj} \hat{A}$ is block-diagonal.

        \item[\upshape(3)] The identity of $\cen(\log \mathbf{f})$ is a sum $e_1 + \cdots + e_p$ of $p$ primitive orthogonal idempotents in $\cen(\log \mathbf{f})$, and there exists an invertible matrix $A$ such that
        $$
            A^{-1} e_j A = \left(\begin{array}{ccc}
                \delta_{1j} \id_{n_1} & & \\
                & \cdots & \\
                & & \delta_{pj} \id_{n_p}
            \end{array}\right),
            \qquad j = 1, \cdots, p,
        $$
        where $\delta_{ij}$ is the Kronecker delta.
    \end{enumerate}
\end{corollary}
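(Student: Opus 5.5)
The plan is to reduce Corollary \ref{corollary:dpdecomp} to Corollary \ref{corollary:dsdecomp}, applied to the vector-valued function $\log \mathbf{f}(\mathbf{z}) = (\log f_1(\mathbf{z}), \cdots, \log f_m(\mathbf{z}))$. Throughout I assume the standing analytic hypotheses under which the statement makes sense: each $f_k$ is nonvanishing on the domain under consideration, and a branch of $\log f_k$ is fixed there. The Hessians $H(\log f_k)$ then exist and do not depend on the branch, since two branches differ by a constant $2\pi i\,\ell$. Because conditions (2) and (3) are literally conditions (2) and (3) of Corollary \ref{corollary:dsdecomp} with $\mathbf{f}$ replaced by $\log \mathbf{f}$, their equivalence needs no new argument. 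The only real task is to show that (1) for $\mathbf{f}$ is equivalent to (1) of Corollary \ref{corollary:dsdecomp} for $\log \mathbf{f}$, with the same $A$ and the same partition $\mathbf{w} = \mathbf{w}_1 \sqcup \cdots \sqcup \mathbf{w}_p$.

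For (1)$\Rightarrow$ the direct sum statement, suppose $\mathbf{f}(A\mathbf{w}) = \tilde{\mathbf{f}}_1(\mathbf{w}_1) \odot \cdots \odot \tilde{\mathbf{f}}_p(\mathbf{w}_p)$. Taking logarithms componentwise, as in Remark \ref{remark:dstodp}, gives $\log \mathbf{f}(A\mathbf{w}) = \sum_j \log \tilde{\mathbf{f}}_j(\mathbf{w}_j)$, up to an additive constant vector. That constant can be absorbed into any one summand, and it does not affect Hessians in any case. For the converse, suppose $\log \mathbf{f}(A\mathbf{w}) = \sum_j \mathbf{g}_j(\mathbf{w}_j)$. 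Exponentiating gives $\mathbf{f}(A\mathbf{w}) = \exp \mathbf{g}_1(\mathbf{w}_1) \odot \cdots \odot \exp \mathbf{g}_p(\mathbf{w}_p)$, which is a direct product with the same blocks.

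Next I will transfer the indecomposability condition. If some factor $\tilde{\mathbf{f}}_j$ were itself a direct product in the variables $\mathbf{w}_j$, then $\log \tilde{\mathbf{f}}_j$ would be a direct sum, by the same logarithm argument. Conversely, a direct sum splitting of $\mathbf{g}_j = \log \tilde{\mathbf{f}}_j$ exponentiates to a direct product splitting of $\tilde{\mathbf{f}}_j$. Hence "no factor is a direct product" is equivalent to "no summand is a direct sum". With this, the equivalence of (1), (2), (3) follows verbatim from Corollary \ref{corollary:dsdecomp}, with the primitive idempotents $e_j$ now taken in $\cen(\log \mathbf{f})$.

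The main obstacle is the branch issue for the logarithm. When I write $\log(\tilde f_{1k}\,\tilde f_{2k}) = \log \tilde f_{1k} + \log \tilde f_{2k}$, the two sides agree only modulo $2\pi i$. I will handle this by working locally on a simply connected domain where all the functions are nonvanishing, choosing branches of $\log \tilde f_{jk}$ there, and noting that the discrepancy is a continuous function with values in $2\pi i\mathbb{Z}$, hence constant. That constant can be absorbed into one summand without changing any Hessian or the center. This is also what justifies calling the correspondence in Theorem B well defined.
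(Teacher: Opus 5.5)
Your proposal is correct and follows the same route as the paper. The paper also obtains this corollary directly from Corollary \ref{corollary:dsdecomp} applied to $\log \mathbf{f}$, via the log/exp correspondence of Remark \ref{remark:dstodp}. You additionally spell out how the ``not a direct product'' condition on the factors transfers to the summands, and how the branch constants can be absorbed into one summand without changing any Hessian or $\cen(\log \mathbf{f})$; the paper leaves both points implicit.
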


\par The algorithm for direct product decomposition is highly similar to Algorithm \ref{algorithm:dsdecomp}. Indeed, if one wish to calculate the direct product decomposition of the function $\mathbf{f}$, it suffices to calculate the direct sum decomposition of $\log \mathbf{f}$.

\begin{example}
    \par Consider the vector-valued function $\mathbf{f}(z_1, z_2, z_3, z_4) = (f_1, f_2)$ where
    $$
        \begin{aligned}
            f_1 &= (z_{1} z_{3} + z_{1} z_{4} + 2 z_{2} z_{3} + 2 z_{2} z_{4} + 3 z_{3}^{2} + 3 z_{3} z_{4})^2 \\
            & \times \exp(z_{1} + z_{2} + 2 z_{3} - 2 z_{4}), \\
            f_2 &= (z_{1}^{2} + 3 z_{1} z_{2} + 6 z_{1} z_{3} - z_{1} z_{4} + 3 z_{2} z_{3} + 5 z_{3}^{2} - z_{3} z_{4}) \\
            & \times \exp(z_{1} z_{2} + 3 z_{1} z_{3} + 2 z_{2}^{2} + 7 z_{2} z_{3} - 2 z_{2} z_{4} + 6 z_{3}^{2} - 4 z_{3} z_{4} - z_{4}^{2}).
        \end{aligned}
    $$
    A direct computation shows that 
    $$
        \cen(\log \mathbf{f}) = \mathbb{C} \left(\begin{array}{rrrr}
            1/4 & 0 & -1/4 & 1/2 \\
            3/4 & 1 & 5/4 & 1/2 \\
            -1/4 & 0 & 1/4 & -1/2 \\
            1/4 & 0 & -1/4 & 1/2
        \end{array}\right) + \mathbb{C} \left(\begin{array}{rrrr}
            3/4 & 0 & 1/4 & -1/2 \\
            -3/4 & 0 & -5/4 & -1/2 \\
            1/4 & 0 & 3/4 & 1/2 \\
            -1/4 & 0 & 1/4 & 1/2
        \end{array}\right)
    $$
    and these two basis elements of $\cen(\log \mathbf{f})$ forms a pair of orthogonal idempotents. By calculating the eigenvectors with respect to $1$ for each idempotents, we obtain an invertible linear substitution
    $$
        \mathbf{z} = \left(\begin{array}{rrrr}
            0 & 1 & 1 & -2 \\
            1 & 0 & -2 & 1 \\
            0 & -1 & 1 & 0 \\
            0 & 1 & 0 & 1
        \end{array}\right) \mathbf{w}.
    $$
    Under this substitution, we have
    $$
        \mathbf{f}(z_1, z_2, z_3, z_4) 
        = \mathbf{f}_1(w_1, w_2) \odot \mathbf{f}_2(w_3, w_4)
    $$
    where
    $$
        \mathbf{f}_1(w_1, w_2) = \left(
            (2 w_1 - 2 w_2)^2 \exp (w_1 - 3 w_2),
            (3 w_1 - 5 w_1) \exp \left((w_1 - 3 w_2) (2w_1 - 2w_2)\right)
        \right)
    $$
    $$
        \mathbf{f}_2(w_3, w_4) = \left(
            (w_3 + w_4)^2 \exp (w_3 - 3 w_4),
            (2 w_3 - 2 w_4) \exp \left((w_3 - 3 w_4) (w_3 + w_4)\right)
        \right).
    $$
\end{example}

\section{Applications to factorization of polynomials}

\par In this section, we apply the theory of centers developed in Section \ref{section:centers} to several explicit examples on the factorization problem of multivariate polynomials. 

\subsection{Direct product decomposition of polynomials} The first application of our theory is the direct product decomposition of polynomials, while the direct sum decomposition has been discussed in some previous work, see e.g. \cite{Har-1, Fang, Huang}.

\begin{example}
    \par Consider the $6$-variate homogeneous polynomial of degree $4$:
    $$
        \begin{aligned}
            f & = x_1^3x_4 + x_1^2x_4^2 + x_1^2x_4x_5 - x_2x_1^2x_4 + x_1^2x_6^2 + 4x_1x_3^2x_4 - 4x_1x_3x_4x_6 - x_1x_4^2x_5 \\ 
            & - 2x_1x_4x_5^2 + x_2x_1x_4x_5 + 2x_1x_4x_6^2 + 2x_1x_5x_6^2 - x_2x_1x_6^2 - 4x_3^2x_4x_5 + 4x_3^2x_6^2 + 4x_3x_4x_5x_6 \\
            & - 4x_3x_6^3 - x_4x_5x_6^2 + x_6^4.
        \end{aligned}
    $$
    By a direct computation, we obtain
    $$
        \cen(\log f) = \mathbb{C} \left(\begin{array}{rrrrrr}
            1 & 0 & 0 & 0 & 0 & 0 \\
            2 & 1 & 0 & -1 & -2 & 0 \\
            0 & 0 & 1 & 0 & 0 & -1/2 \\
            0 & 0 & 0 & 0 & 0 & 0 \\
            1 & 0 & 0 & 0 & 0 & 0 \\
            0 & 0 & 0 & 0 & 0 & 0
        \end{array}\right) + \mathbb{C} \left(\begin{array}{rrrrrr}
            1 & 0 & 0 & 0 & 0 & 0 \\
            0 & 1 & 0 & 0 & 0 & 0 \\
            0 & 0 & 1 & 0 & 0 & 0 \\
            0 & 0 & 0 & 1 & 0 & 0 \\
            0 & 0 & 0 & 0 & 1 & 0 \\
            0 & 0 & 0 & 0 & 0 & 1
        \end{array}\right).
    $$
    Note that these two basis elements forms a pair of orthogonal idempotents in $\cen(\log f)$. Hence, by calculating the eigenvectors corresponding to the eigenvalue 1 of these idempotents, we obtain the invertible linear substitution
    $$
        \mathbf{x} = \left(\begin{array}{rrrrrr}
            1 & 0 & 0 & 0 & 0 & 0 \\
            0 & 0 & 1 & 0 & 2 & 1 \\
            0 & 1 & 0 & 1/2 & 0 & 0 \\
            0 & 0 & 0 & 0 & 0 & 1 \\
            1 & 0 & 0 & 0 & 1 & 0 \\
            0 & 0 & 0 & 1 & 0 & 0
        \end{array}\right) \mathbf{y}.
    $$
    Under this substitution, we have
    $$
        f = (3 y_1^2 + 4 y_2^2 - y_1 y_3)(y_4^2 - y_5 y_6).
    $$
    Moreover, each quadratic factor in the right-hand side of the above identity is irreducible by Example \ref{example: factoring quadratic forms}.
\end{example}

\begin{example}
    \par Consider the following homogeneous polynomial taken from \cite[Example 6.6]{Fed}:
    $$
        \begin{aligned}
            f & = x_1^4 + 4x_1^3x_2 + 6x_1^2x_2^2 + 4x_1x_2^3 + 2x_2^4 + 8x_1^3x_3 + 24x_1^2x_2x_3 + 24x_1x_2^2x_3 + 8x_2^3x_3 + 24x_1^2x_3^2 \\
& + 48x_1x_2x_3^2 + 24x_2^2x_3^2 + 32x_1x_3^3 + 32x_2x_3^3 + 17x_3^4 - 12x_1^3x_4 - 36x_1^2x_2x_4 - 36x_1x_2^2x_4 \\
& - 12x_2^3x_4 - 72x_1^2x_3x_4 - 144x_1x_2x_3x_4 - 72x_2^2x_3x_4 - 144x_1x_3^2x_4 - 144x_2x_3^2x_4 - 96x_3^3x_4 \\
& + 54x_1^2x_4^2 + 108x_1x_2x_4^2 + 54x_2^2x_4^2 + 216x_1x_3x_4^2 + 217x_2x_3x_4^2 + 216x_3^2x_4^2 - 108x_1x_4^3 \\
& - 108x_2x_4^3 - 216x_3x_4^3 + 82x_4^4.
        \end{aligned}
    $$
    We calculate the direct product decomposition of its associated form $A(f)$. Here, 
    $$
        \begin{aligned}
            A(f) & = 9785 x_{1}^{8} - 32316 x_{1}^{7} x_{2} - 19488 x_{1}^{7} x_{3} - 4194 x_{1}^{7} x_{4} + 26370 x_{1}^{6} x_{2}^{2} + 40920 x_{1}^{6} x_{2} x_{3} \\
            & - 9000 x_{1}^{6} x_{2} x_{4} + 8730 x_{1}^{6} x_{3}^{2} - 7200 x_{1}^{6} x_{3} x_{4} - 7395 x_{1}^{6} x_{4}^{2} - 260 x_{1}^{5} x_{2}^{3} - 25620 x_{1}^{5} x_{2}^{2} x_{3} \\
            & + 17820 x_{1}^{5} x_{2}^{2} x_{4} - 11910 x_{1}^{5} x_{2} x_{3}^{2} + 21600 x_{1}^{5} x_{2} x_{3} x_{4} + 7140 x_{1}^{5} x_{2} x_{4}^{2} - 595 x_{1}^{5} x_{3}^{3} \\ 
            & + 6480 x_{1}^{5} x_{3}^{2} x_{4} + 3120 x_{1}^{5} x_{3} x_{4}^{2} - 1800 x_{1}^{5} x_{4}^{3} + 15 x_{1}^{4} x_{2}^{4} - 180 x_{1}^{4} x_{2}^{3} x_{3} - 360 x_{1}^{4} x_{2}^{3} x_{4} \\ 
            & + 6390 x_{1}^{4} x_{2}^{2} x_{3}^{2} - 17280 x_{1}^{4} x_{2}^{2} x_{3} x_{4} + 2970 x_{1}^{4} x_{2}^{2} x_{4}^{2} - 495 x_{1}^{4} x_{2} x_{3}^{3} - 8640 x_{1}^{4} x_{2} x_{3}^{2} x_{4} \\
            & - 720 x_{1}^{4} x_{2} x_{3} x_{4}^{2} + 2880 x_{1}^{4} x_{2} x_{4}^{3} + 15 x_{1}^{4} x_{3}^{4} - 720 x_{1}^{4} x_{3}^{3} x_{4} + 1080 x_{1}^{4} x_{3}^{2} x_{4}^{2} + 1440 x_{1}^{4} x_{3} x_{4}^{3} \\ 
            & + 30 x_{1}^{4} x_{4}^{4} + 24 x_{1}^{3} x_{2}^{5} + 60 x_{1}^{3} x_{2}^{4} x_{3} + 90 x_{1}^{3} x_{2}^{4} x_{4} + 30 x_{1}^{3} x_{2}^{3} x_{3}^{2} - 60 x_{1}^{3} x_{2}^{3} x_{4}^{2} + 15 x_{1}^{3} x_{2}^{2} x_{3}^{3} \\
            & + 4320 x_{1}^{3} x_{2}^{2} x_{3}^{2} x_{4} - 2880 x_{1}^{3} x_{2}^{2} x_{3} x_{4}^{2} + 120 x_{1}^{3} x_{2} x_{3}^{4} - 1440 x_{1}^{3} x_{2} x_{3}^{2} x_{4}^{2} - 1440 x_{1}^{3} x_{2} x_{3} x_{4}^{3} \\ 
            & + 240 x_{1}^{3} x_{2} x_{4}^{4} + 12 x_{1}^{3} x_{3}^{5} + 90 x_{1}^{3} x_{3}^{4} x_{4} - 120 x_{1}^{3} x_{3}^{3} x_{4}^{2} + 120 x_{1}^{3} x_{3} x_{4}^{4} + 36 x_{1}^{3} x_{4}^{5} \\ 
            & - 12 x_{1}^{2} x_{2}^{5} x_{3} + 15 x_{1}^{2} x_{2}^{4} x_{4}^{2} - 5 x_{1}^{2} x_{2}^{3} x_{3}^{3} + 720 x_{1}^{2} x_{2}^{2} x_{3}^{2} x_{4}^{2} - 12 x_{1}^{2} x_{2} x_{3}^{5}  - 120 x_{1}^{2} x_{2} x_{3} x_{4}^{4} \\
            & + 15 x_{1}^{2} x_{3}^{4} x_{4}^{2} + 2 x_{1}^{2} x_{4}^{6}.
        \end{aligned}
    $$
    The center of the function $\log A(f)$ is spanned by
    $$
        e_1 = \left(\begin{array}{rrrr}
            1 & 0 & 0 & 0 \\
            1 & 0 & 0 & 0 \\
            2 & 0 & 0 & 0 \\
            -3 & 0 & 0 & 0 
        \end{array}\right) \quad \text{and} \quad
        e_2 = \left(\begin{array}{rrrr}
            0 & 0 & 0 & 0 \\
            -1 & 1 & 0 & 0 \\
            -2 & 0 & 1 & 0 \\
            3 & 0 & 0 & 1
        \end{array}\right),
    $$
    which is a pair of non-trivial orthogonal idempotents satisfying $e_1 + e_2 = \id_4$. By taking the substitution
    $$
        \mathbf{x} = \left(\begin{array}{rrrr}
                1 & 0 & 0 & 0 \\
                1 & 1 & 0 & 0 \\
                2 & 0 & 1 & 0 \\
                -3 & 0 & 0 & 1
        \end{array}\right) \mathbf{y}
    $$
    and applying it to $A(f)(\mathbf{x})$, we obtain
    $$
        A(f)(\mathbf{x}) = F_1(y_1) \cdot F_2(y_2, y_3, y_4),
    $$
    where $F_1(y_1) = - y_1^2$ and
    $$
        F_2(y_2, y_3, y_4) = 12 y_{2}^{5} y_{3} - 15 y_{2}^{4} y_{4}^{2} + 5 y_{2}^{3} y_{3}^{3} - 720 y_{2}^{2} y_{3}^{2} y_{4}^{2} + 12 y_{2} y_{3}^{5} + 120 y_{2} y_{3} y_{4}^{4} - 15 y_{3}^{4} y_{4}^{2} - 2 y_{4}^{6}.
    $$
\end{example}

\begin{remark}
    \par In \cite{Fed}, Fedorchuk proposed a method for solving the direct sum decomposition of a given form $f$, which was based on the calculation of the direct product decomposition of its associated form $A(f)$. However, based on our theory, we argue that there is no essential difference between these two types of decomposition. Handling the problem of direct sum decomposition in terms of associated forms may potentially introduce unnecessary complexity, as the degree of $A(f)$ is significantly higher than that of $f$.
\end{remark}

\subsection{Orbits of monomials} In this subsection, we focus on whether and how a homogeneous polynomial $\mathbf{f}(\mathbf{x})$ can be expressed as a product of powers of linearly independent linear forms
\begin{equation}
    \label{equation:powerprod}
    f(\mathbf{x}) = l_1(\mathbf{x})^{c_1} \cdots l_m(\mathbf{x})^{c_m}, 
    \qquad c_i \in \mathbb{Z}_{> 0}.
\end{equation}
The orbit $\orb(f)$ of an $n$-variate polynomial $f(\mathbf{x})$ under the action of the general linear group $\gl(n, \mathbb{C})$ is  
$$
    \orb(f) = \left\{
        f(A \mathbf{x}) : A \in \gl(n, \mathbb{C})
    \right\}.
$$
It is obvious that any homogeneous polynomial of the form (\ref{equation:powerprod}) must belong to the orbit of the monomial $x_1^{c_1} \cdots x_m^{c_m}$. This problem is related to the Chow variety. Recall that the Chow variety $\chow(1, d, n)$ of $0$-cycles in $\mathbb{P}^{n - 1}$ of degree $d$ is the  projectivization of the space of $n$-variate homogeneous polynomials of degree $d$ which are products of linear forms \cite[Chapter 4]{Gel}. Hence, the Zariski closure of the orbit $\orb(x_1^{c_1} \cdots x_n^{c_n})$ of a monomial with total degree $d$ is actually a subvariety of $\chow(1, d, n)$.

By Algorithm \ref{algorithm:tonondeg}, it suffices to consider only the nondegenerate polynomials. In this case, if an $n$-variate homogeneous polynomial of degree $d$ can be expressed as the form (\ref{equation:powerprod}), then we must have the equality $m = n$.

\begin{corollary}
    \label{corollary:powerprod}
    \par Let $f(\mathbf{x})$ be a nondegenerate $n$-variate homogeneous polynomial of degree $d$, and $A = (a_{ij}) \in \gl_n(\mathbb{C})$. Then the following statements are equivalent:
    \begin{enumerate}
        \item[\upshape(1)] $f(\mathbf{x}) = (a_{11}x_1 + \cdots + a_{1n}x_n)^{c_1} \cdots (a_{n1}x_1 + \cdots + a_{nn}x_n)^{c_n}$, where $d = c_1 + \cdots + c_n$.
        
        \item[\upshape(2)] $(A^{-1})^{t} H(\log f) A^{-1} = - \diag \left\{\frac{c_1}{(a_{11}x_1 + \cdots + a_{1n}x_n)^2}, \cdots, \frac{c_n}{(a_{n1}x_1 + \cdots + a_{nn}x_n)^2}\right\}$.

        \item[\upshape(3)] The identity of $\cen(\log f)$ admits a primitive orthogonal idempotent decomposition as 
        $$
            1_{\cen(f)} = A^{-1}E_{11}A + \cdots + A^{-1}E_{nn}A,
        $$
        where $E_{kk}$ is the matrix whose $(k, k)$-entry is $1$ and others are $0$.
    \end{enumerate}
\end{corollary}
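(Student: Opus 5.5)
The plan is to work throughout in the new coordinates $\mathbf{y} = A\mathbf{x}$, so that $y_i = a_{i1}x_1 + \cdots + a_{in}x_n$, and to set $\tilde f(\mathbf{y}) = f(A^{-1}\mathbf{y})$. Since $\log f$ is only defined up to additive constants on simply connected domains avoiding the zero set of $f$, I will note at the outset that $H(\log f) = \bigl(f H(f) - \nabla f\, \nabla f^{t}\bigr)/f^2$ is a globally defined rational matrix. Hence $\cen(\log f)$ makes sense, and the branch choice plays no role in any Hessian or center computation. By Lemma~\ref{lemma:change}(2) applied to the substitution $\mathbf{x} = A^{-1}\mathbf{y}$, we have $H(\log \tilde f) = (A^{-1})^{t} H(\log f) A^{-1}$. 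By Remark~\ref{remark:center}, $\cen(\log \tilde f) = A\,\cen(\log f)\,A^{-1}$, so $A^{-1}E_{kk}A \in \cen(\log f)$ if and only if $E_{kk} \in \cen(\log\tilde f)$. Every condition in the statement therefore reduces to a condition on $\tilde f$.

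For (1)$\Rightarrow$(2), statement (1) says $\tilde f = y_1^{c_1}\cdots y_n^{c_n}$. Then $\log\tilde f = \sum_i c_i \log y_i$, and its Hessian is $-\diag\{c_i/y_i^2\}$, which is exactly (2) after rewriting $y_i$ in terms of $\mathbf{x}$. For (2)$\Rightarrow$(3), a diagonal Hessian commutes with each $E_{kk}$ and is symmetric, so every $E_{kk}$ lies in $\cen(\log\tilde f)$. The $E_{kk}$ are orthogonal idempotents summing to $\id_n$, and they are primitive because their blocks have size one. Conjugating back gives the decomposition in (3).

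The substantive direction is (3)$\Rightarrow$(1). First I apply Corollary~\ref{corollary:dsdecomp} (equivalently, Theorem~\ref{theorem:mainthm} applied repeatedly with Lemma~\ref{lemma:separate}) to $\log \tilde f$. This gives $\log\tilde f(\mathbf{y}) = g_1(y_1) + \cdots + g_n(y_n)$, and hence $\tilde f(\mathbf{y}) = h_1(y_1)\cdots h_n(y_n)$ with $h_i = \exp g_i$ nowhere vanishing on the domain. Next I will show each $h_i$ is, up to a constant, a polynomial in $y_i$. To do this, fix generic values $y_j = b_j$ for $j \neq i$ with $\prod_{j\ne i} h_j(b_j) \neq 0$; then $h_i(y_i)$ is a nonzero constant multiple of the polynomial $\tilde f(b_1,\ldots,y_i,\ldots,b_n)$. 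So $\tilde f = C\,p_1(y_1)\cdots p_n(y_n)$ with univariate polynomials $p_i$. Homogeneity of degree $d$ forces each $p_i$ to be homogeneous, i.e.\ $p_i = \lambda_i y_i^{c_i}$: compare $\tilde f(t\mathbf{y}) = t^d \tilde f(\mathbf{y})$ and use unique factorization, since a nonmonomial $p_i$ would contribute a factor $y_i - \alpha$ with $\alpha\ne 0$ that is not preserved under scaling. Thus $\tilde f = C\, y_1^{c_1}\cdots y_n^{c_n}$ with $\sum c_i = d$. Nondegeneracy of $f$, which is invariant under the substitution by the remark after the definition of $\ess$, forces every $c_i > 0$, since otherwise $\partial\tilde f/\partial y_i = 0$.

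The main obstacle is the constant $C$. The Hessian of $\log f$, and hence the center, cannot detect a nonzero scalar factor, so (2) and (3) only determine $f$ up to $C$. I will handle this by absorbing $C^{1/c_1}$ into the first row of $A$. This rescaling changes neither (2) nor (3): under $A \mapsto DA$ with $D$ diagonal, the matrices $A^{-1}E_{kk}A$ are unchanged, and in (2) the factors cancel. So the equivalence is to be read with $A$ determined up to left multiplication by an invertible diagonal matrix, and with that convention (3)$\Rightarrow$(1) holds for a suitably normalized $A$. The same diagonal rescaling is used in (2)$\Rightarrow$(3)$\Rightarrow$(1), closing the cycle.
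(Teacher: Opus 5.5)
Your proposal is correct and follows essentially the route the paper intends: the paper gives no separate proof and treats the corollary as the case of Corollary~\ref{corollary:dpdecomp} in which every block has size one, and you supply details it omits, namely the homogeneity argument forcing each univariate factor to be a monomial, nondegeneracy forcing $c_i>0$, and the transfer of idempotents via $\cen(\log \tilde f)=A\,\cen(\log f)\,A^{-1}$. Your point about the scalar is also a genuine correction to the statement as written: for instance $f=2x_1x_2$ with $A=\id_2$ and $c_1=c_2=1$ satisfies (2) and (3) but not (1), so the equivalence holds only when $A$ may be replaced by $DA$ for an invertible diagonal $D$, which is exactly how you normalize it.
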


\par The following proposition shows that when a homogeneous polynomial is a direct product of independent linear forms, its center possesses an additional algebraic structure.
\begin{proposition}
    \label{proposition:algstru}
    \par A nondegenerate $n$-variate homogeneous polynomial $f(\mathbf{x})$ of degree $d$ can be expressed as a product of powers of independent linear forms
    $$
        f(\mathbf{x}) = (a_{11}x_1 + \cdots + a_{1n}x_n)^{c_1} \cdots (a_{n1}x_1 + \cdots + a_{nn}x_n)^{c_n}, \quad d = c_1 + \cdots + c_n
    $$
    if and only if $\cen(\log f)$ is a $\mathbb{C}$-subalgebra of $\mathbb{C}^{n \times n}$ that is isomorphic to $\mathbb{C}^{n}$.
\end{proposition}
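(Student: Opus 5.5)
Both directions go through Remark \ref{remark:center}, which gives $\cen(\log f(A^{-1}\mathbf{y})) = A\,\cen(\log f)\,A^{-1}$. Since this is conjugation by an invertible matrix, the property of being a subalgebra isomorphic to $\mathbb{C}^n$ is unchanged. So in each direction I will change coordinates to $y_i = a_{i1}x_1+\cdots+a_{in}x_n$, i.e. $\mathbf{y}=A\mathbf{x}$.

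\emph{Necessity.} Suppose $f=\prod_i l_i^{c_i}$ with $l_i$ independent. After the substitution, $g(\mathbf{y}) = y_1^{c_1}\cdots y_n^{c_n}$, so $\log g=\sum_i c_i\log y_i$. Its Hessian is
\[
H(\log g)=-\diag\{c_1/y_1^2,\dots,c_n/y_n^2\},
\]
which is the content of Corollary \ref{corollary:powerprod}(2). A matrix $X$ lies in the center exactly when $D X$ is symmetric for this diagonal $D$. That condition reads $\frac{c_i}{y_i^2}x_{ij}=\frac{c_j}{y_j^2}x_{ji}$ identically in $\mathbf{y}$. Because the $c_i$ are positive and $y_i^{-2},y_j^{-2}$ are linearly independent functions for $i\neq j$, this forces $x_{ij}=x_{ji}=0$. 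Hence $\cen(\log g)$ is exactly the algebra of diagonal matrices, which is a subalgebra isomorphic to $\mathbb{C}^n$. Conjugating back gives the claim for $\cen(\log f)$.

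\emph{Sufficiency.} Suppose $\cen(\log f)$ is a subalgebra isomorphic to $\mathbb{C}^n$. It then contains $n$ pairwise orthogonal idempotents $e_1,\dots,e_n$ summing to its identity. This identity should be $\id_n$, since $\id_n$ always lies in the center and is the unit of any subalgebra containing it. The $e_i$ are nonzero, so each has rank at least $1$; their ranks sum to $n$, so each has rank exactly $1$. Simultaneously diagonalizing them gives $A$ with $A e_i A^{-1}=E_{ii}$, which is Corollary \ref{corollary:powerprod}(3) up to the naming of $A$. By Corollary \ref{corollary:dpdecomp} (equivalently, Theorem \ref{theorem:mainthm} applied to $\log f$), after the substitution
\[
\log f = \sum_i \phi_i(y_i), \qquad f = \prod_i h_i(y_i),
\]
with each $h_i$ a function of one variable.

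The main obstacle is showing that each $h_i$ is a power of $y_i$. I would argue as follows. Setting $y_j=1$ for $j\neq i$ (or any generic constants) shows that each $h_i$ is, up to a nonzero constant, a polynomial in $y_i$; the constant is nonzero because $f\neq0$. Homogeneity gives $f(t\mathbf{y})=t^d f(\mathbf{y})$, hence $\prod_i h_i(ty_i)/h_i(y_i)=t^d$. Taking logarithmic derivatives in $t$ at $t=1$ separates the variables: $\sum_i y_ih_i'(y_i)/h_i(y_i)=d$. Each summand is therefore a constant $c_i$, so $h_i=\lambda_i y_i^{c_i}$. Since $h_i$ is a polynomial, $c_i$ is a nonnegative integer. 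Nondegeneracy excludes $c_i=0$, because then $f$ would not depend on $y_i$.

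The constants $\lambda_i$ can be absorbed into the linear forms, or the identity can be read up to a scalar. Returning to $\mathbf{x}$ gives $f=\prod_i(a_{i1}x_1+\cdots+a_{in}x_n)^{c_i}$ with $\sum_i c_i=d$, where the $a_{ij}$ are the entries of the matrix $A$ relating $\mathbf{y}$ and $\mathbf{x}$.
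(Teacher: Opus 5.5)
Your proof is correct. The necessity direction is the same as the paper's. You pass to $\mathbf{y}=A\mathbf{x}$, note that $H(\log g)=-\diag\{c_1/y_1^2,\dots,c_n/y_n^2\}$ forces the center to be exactly the diagonal matrices, and conjugate back via Remark~\ref{remark:center}. The paper's written proof consists of only this step.

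For sufficiency the paper merely leans on Corollary~\ref{corollary:powerprod}, whose implication (3)$\Rightarrow$(1) is stated without proof. The passage from the decomposition $f=\prod_i h_i(y_i)$ supplied by Corollary~\ref{corollary:dpdecomp} to $h_i=\lambda_i y_i^{c_i}$ is never justified there. Your argument fills exactly these holes:
\begin{itemize}
\item the unit of the subalgebra must be $\id_n$, because $\id_n\in\cen(\log f)$;
\item the trace count forces the $n$ orthogonal idempotents to have rank one, so they conjugate simultaneously to $E_{11},\dots,E_{nn}$;
\item homogeneity, via Euler's identity in the separated variables, forces each univariate factor to be a monomial, and nondegeneracy gives $c_i\ge 1$.
\end{itemize}

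The only point needing a word of care is that $\log f$, and hence $f=\prod_i h_i(y_i)$, is a priori valid only on a polydisc where a branch of $\log f$ exists. The constants substituted for $y_j$, $j\neq i$, should therefore be chosen there; the resulting polynomial identity then extends to all of $\mathbb{C}^n$. Your ``generic constants'' already covers this.
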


\begin{proof}
    \par By Remark \ref{remark:center} and \ref{corollary:powerprod}, we have
    $$
        A \cen(\log f) A^{-1} = \left\{
            \text{diagonal matrices in $\mathbb{C}^{n \times n}$}
        \right\}
    $$
    where $A = (a_{ij})$. Hence, $\cen(\log f)$ is isomorphic to $\mathbb{C}^n$.
\end{proof}

\par To elucidate our approach, we consider the factorization of ternary cubics. There is a classical criterion in invariant theory which says a ternary cubic is a product of linear forms if and only if it is equal to its Hessian determinant up to a scalar, see \cite{Brook} for a detailed modern treatment. The present approach provides both a criterion and an algorithm, based on elementary linear algebra.

\begin{corollary}
    \par A ternary cubic $f(x_1, x_2, x_3)$ is a product of linear forms if and only if one of the following cases occurs:
    \begin{enumerate}
        \item[\upshape(1)] $\ess(f) = 1$.
        \item[\upshape(2)] $\ess(f) = 2$.
        \item[\upshape(3)] $\ess(f) = 3$ and $\cen(\log f) \cong \mathbb{C}^3$.
    \end{enumerate}
\end{corollary}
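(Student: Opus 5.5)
The plan is to reduce to the number of essential variables $r=\ess(f)\in\{1,2,3\}$, using Proposition \ref{proposition:reduce} and Algorithm \ref{algorithm:tonondeg}. Choose an invertible substitution $\mathbf{x}=A\mathbf{w}$ so that $f(A\mathbf{w})=\tilde f(w_1,\dots,w_r)$ with $\tilde f$ nondegenerate in $r$ variables. Being a product of linear forms is preserved under invertible linear substitutions: a linear form in $\mathbf{x}$ becomes a linear form in $\mathbf{w}$, and conversely. So $f$ factors into linear forms if and only if the cubic form $\tilde f$ in $r$ variables does. This handles the three cases separately.

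\textbf{Cases $r=1$ and $r=2$.} If $r=1$, then $\tilde f=c\,w_1^3$, which is trivially a product of linear forms. If $r=2$, then $\tilde f(w_1,w_2)$ is a binary cubic form. Dehomogenizing and using the fundamental theorem of algebra over $\mathbb{C}$, every binary form splits into linear factors. Hence in cases (1) and (2) the cubic $f$ is always a product of linear forms, which gives the ``if'' direction for these cases.

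\textbf{Case $r=3$.} Here $f$ is nondegenerate, and Proposition \ref{proposition:algstru} applies with $n=d=3$. It says that $f$ is a product of powers of three independent linear forms if and only if $\cen(\log f)\cong\mathbb{C}^3$. It remains to check that, for nondegenerate $f$, ``product of linear forms'' is the same as ``product of powers of independent linear forms''. Suppose $f=l_1l_2l_3$ and the span of $l_1,l_2,l_3$ has dimension $s$. After a linear change of variables, $f$ depends on only $s$ variables, so $\ess(f)\le s$. Nondegeneracy then forces $s=3$, so $l_1,l_2,l_3$ are independent, all $c_i=1$, and Proposition \ref{proposition:algstru} applies.

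\textbf{Assembling the equivalence.} Conversely, suppose $f$ is a product of linear forms. Then $\ess(f)\in\{1,2,3\}$ (it is at least $1$ since the cubic is nonzero), and if $\ess(f)=3$ the previous paragraph gives $\cen(\log f)\cong\mathbb{C}^3$. So one of (1)--(3) holds. The only subtle point is this independence step in the nondegenerate case, which rests on the invariance of $\ess$ under invertible substitutions (the Remark after the definition of $\ess$). One should also take the isomorphism $\cen(\log f)\cong\mathbb{C}^3$ in the sense of Proposition \ref{proposition:algstru}, namely as a $\mathbb{C}$-subalgebra of $\mathbb{C}^{3\times3}$.
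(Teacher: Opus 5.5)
Your proof is correct and follows the paper's route. Like the paper, you split by $\ess(f)$: when $\ess(f)=1$, $f$ is a cube of a linear form; when $\ess(f)=2$, you factor the binary cubic over $\mathbb{C}$; when $\ess(f)=3$, you invoke Proposition \ref{proposition:algstru}. You also make explicit two points the paper leaves implicit: the ``only if'' direction, and the check that nondegeneracy forces the three linear factors to be independent, so that Proposition \ref{proposition:algstru} applies.
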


\begin{proof}
    \par In case (1), $f$ is a cubic of some linear form, that is,
    $$
        f(x_1, x_2, x_3) = (a_1 x_1 + a_2 x_2 + a_3 x_3)^3.
    $$
    In case (2), there exists an invertible linear substitution $\mathbf{x} = A \mathbf{y}$ such that
    $$
        \begin{aligned}
            f(x_1, x_2, x_3) 
            &= \tilde{f}(y_1, y_2) = y_1^3 + b_1 y_1^2 y_2 + b_2 y_1 y_2^2 + b_3 y_2^3 \\
            &= \left(y_1 - \lambda_1 y_2\right) \left(y_1 - \lambda_2 y_2\right) \left(y_1 - \lambda_3 y_2\right),
        \end{aligned}
    $$
    where $\lambda_1, \lambda_2, \lambda_3$ are roots of the cubic equation $\tilde{f}(y_1, 1) = 0$. As for the case (3), this is a direct consequence of Proposition \ref{proposition:algstru}.
\end{proof}

\begin{example}
    \par Consider the following ternary cubic:
    $$
        f(x_1, x_2, x_3) = x_1^3 + x_2^3 + x_3^3 - 3 x_1 x_2 x_3.
    $$
    The center of the function $\log f$ is
    $$
        \cen(\log f) = \mathbb{C} \left(\begin{array}{ccc}
            1 & 0 & 0 \\ 0 & 1 & 0 \\ 0 & 0 & 1
        \end{array}\right) + 
        \mathbb{C} \left(\begin{array}{ccc}
            0 & 1 & 0 \\ 0 & 0 & 1 \\ 1 & 0 & 0
        \end{array}\right) +
        \mathbb{C} \left(\begin{array}{ccc}
            0 & 0 & 1 \\ 1 & 0 & 0 \\ 0 & 1 & 0
        \end{array}\right).
    $$
    It is straightforward to check that
    $$
        e_1 = \left(\begin{array}{rrr}
            1/3 & 1/3 & 1/3 \\ 1/3 & 1/3 & 1/3 \\ 1/3 & 1/3 & 1/3
        \end{array}\right), \quad
        e_2 = \id_3 - e_1 = \left(\begin{array}{rrr}
            2/3 & -1/3 & -1/3 \\ -1/3 & 2/3 & -1/3 \\ -1/3 & -1/3 & 2/3
        \end{array}\right)
    $$
    is a pair of orthogonal idempotents in $\cen(\log f)$, which correspond to an invertible linear substitution 
    $$
        \mathbf{x} = \left(\begin{array}{rrr}
            1 & -1 & -1 \\ 1 & 0 & 1 \\ 1 & 1 & 0
        \end{array}\right) \mathbf{y}.
    $$
    Under this substitution, we obtain
    $$
        \begin{aligned}
            f(x_1, x_2, x_3) & = 9 y_1 (y_2^2 + y_2 y_3 + y_3^2) \\
            & = y_1 (y_2 - \bar{\omega} y_3) (y_2 - \omega y_3) \\
            & = (x_1 + x_2 + x_3) (x_1 + \omega x_2 + \omega^2 x_3) (x_1 + \omega^2 x_2 + \omega x_3),
        \end{aligned}
    $$
    where $\omega = \frac{- 1 + \sqrt{3}i}{2}$ is the cubic root of unity.
\end{example}

\begin{example}
    \par Let 
    $$
        f(x_1, x_2, x_3) = x_1^3x_2 - x_1^3x_3 - 2x_1^2x_2^2 + 3x_1^2x_2x_3 - x_1^2x_3^2 + x_1x_2^3 - 3x_1x_2^2x_3 + 2x_1x_2x_3^2 + x_2^3x_3 - x_2^2x_3^2.
    $$
    The center of $\log f$ is
    $$
        \cen(\log f) = \mathbb{C} \left(\begin{array}{rrr}
    		1/2 & 0 & 1/2 \\
    		1/2 & 0 & 1/2\\
    		1/2 &  0 & 1/2
        \end{array}\right) + \mathbb{C} \left(\begin{array}{rrr}
			0 & 1/2 & -1/2 \\
			0 & 1/2 & -1/2\\
			0 & -1/2& 1/2
        \end{array}\right) + \mathbb{C} \left(\begin{array}{rrr}
    		1/2 & -1/2 & 0\\
    		-1/2 & 1/2  & 0\\
    		-1/2 &  1/2 & 0
        \end{array}\right).
    $$
    These three matrices form a complete set of primitive orthogonal idempotents of $\cen(\log f)$, and they correspond to an invertible linear substitution
    $$
        \mathbf{x} = \left(
            \begin{array}{rrr}
                1 & 1 & 1 \\
        		1 & 1 & -1\\
        		1 & -1 & -1
            \end{array}
        \right) \mathbf{y}.
    $$
    Under this substitution, we obtain
    $$
        g(\mathbf{y}) = 16 y_1 y_2 y_3^2 = (x_1 + x_3) (x_2 - x_3) (x_1 - x_2)^2.
    $$
\end{example}

\section*{Use of AI tools declaration}
The authors declare they have not used Artificial Intelligence (AI) tools in the creation of this article.

\section*{Acknowledgements}



H.-L. Huang was partially supported by the Key Program of the Natural Science Foundation of Fujian Province (Grant No. 2024J02018) and the National Natural Science Foundation of China (Grant No. 12371037). Y. Ye was partially supported by the National Key R\&D Program of China (Grant No. 2024YFA1013802), the National Natural Science Foundation of China (Grant Nos. 12131015 and 12371042), and the Innovation Program for Quantum Science and Technology (Grant No. 2021ZD0302902).

\section*{Conflict of interest}
The authors declare no conflicts of interest.

    \bibliographystyle{plain}
    \bibliography{references}
\end{document}